\theoremstyle{plain} 
\newtheorem{theorem}{Theorem}[section]
\newtheorem{corollary}[theorem]{Corollary}
\newtheorem{lemma}[theorem]{Lemma}
\newtheorem{proposition}[theorem]{Proposition}
\theoremstyle{definition} 
\newtheorem{definition}[theorem]{Definition}
\newtheorem{remark}[theorem]{Remark}
\newtheorem{example}[theorem]{Example}
\def\ifenv#1{
	\def\@tempa{#1}%
	\ifx\@tempa\@currenvir
	   \expandafter\@firstoftwo
	 \else
	   \expandafter\@secondoftwo
	\fi
}
\newcommand{\thlabel}[1]{%
	\label{#1}
	\begingroup
	\ifenv{theorem}{
		\def\@currentlabel{Theorem \ref{#1}}
		\label{#1-th}
	}
	{\ifenv{unprovedtheorem}{
		\def\@currentlabel{Theorem \ref{#1}}
		\label{#1-th}
	}
 	{\ifenv{proposition}{
		\def\@currentlabel{Proposition \ref{#1}}
		\label{#1-th}
	}
	{\ifenv{unprovedproposition}{
		\def\@currentlabel{Proposition \ref{#1}}
		\label{#1-th}
	}
	{\ifenv{lemma}{
		\def\@currentlabel{Lemma \ref{#1}}
		\label{#1-th}
	}
	{\ifenv{corollary}{
		\def\@currentlabel{Corollary \ref{#1}}
		\label{#1-th}
	}
	{\ifenv{remark}{
		\def\@currentlabel{Remark \ref{#1}}
		\label{#1-th}
	}
	{\ifenv{example}{
		\def\@currentlabel{Example \ref{#1}}
		\label{#1-th}
	}
	{\ifenv{definition}{
		\def\@currentlabel{Definition \ref{#1}}
		\label{#1-th}
	}
	{\GenericWarning{No Environment Found}}}}}}}}}}
	\endgroup
}
\newcommand{\thref}[1]{\ref{#1-th}}
\newcommand{\ovln}[1]{\overline{#1}}
\newcommand{\Excat}{\mathsf{Ex}}
\newcommand{\exlex}[1]{(#1)_{\mathsf{ex}/\mathsf{lex}}}
\newcommand{\angbr}[2]{\langle #1,#2 \rangle}
\newcommand{\freccia}[3]{#2 \colon #1  \to   #3}
\newcommand{\arrow}[3]{#2 \colon #1  \longrightarrow #3}
\newcommand{\frecciainj}[3]{\xymatrix{#2 \colon #1  \ar@{^{(}->}[r] &  #3}}
\newcommand{\arrowup}[3]{#1 \xrightarrow{\; #2 \;} #3}
\newcommand{\pbmorph}[2]{#1^{\ast}#2} 
\newcommand{\duefreccia}[3]{\xymatrix@C=0.5cm{#2 \colon #1  \ar@{=>}[r] &  #3}}
\newcommand{\comsquare}[8]{ \xymatrix@+1pc{ 
#1 \ar[r]^{#5} \ar[d]_{#6} & #2 \ar[d]^{#7} \\
#3 \ar[r]_{#8} & #4 
}}
\newcommand{\pullback}[8]{ \xymatrix@+1pc{ 
#1 \pullbackcorner \ar[r]^{#5} \ar[d]_{#6} & #2 \ar[d]^{#7} \\
#3 \ar[r]_{#8} & #4 
}}
\newcommand{\quadratocomm}[8]{ \xymatrix@+1pc{ 
#1 \ar[r]^{#5} \ar[d]_{#6} & #2 \ar[d]^{#7} \\
#3 \ar[r]_{#8} & #4 
}}
\newcommand{\comsquarelargo}[8]{ \xymatrix@+1pc{ 
#1 \ar[rr]^{#5} \ar[d]_{#6} && #2 \ar[d]^{#7} \\
#3 \ar[rr]_{#8} && #4 
}}
\newcommand{\parallelmorphisms}[4]{\xymatrix@+1pc{
#1 \ar @<+4pt>[r]^{#2} \ar @<-4pt>[r]_{#3} & #4
}}
\newcommand{\relation}[4]{\xymatrix@+1pc{
\angbr{#2}{#3}\colon #1 \ar @<+4pt>[r] \ar @<-4pt>[r] & #4
}}
\newcommand{\frecceparalleleopposte}[4]{\xymatrix@+1pc{
#1 \ar@<+4pt>[r]^{#2} \ar@<-4pt>@{<-}[r]_{#3} & #4
}}
\newcommand{\equalizer}[6]{\xymatrix@+1pc{
#1 \ar[r]^{#2} & #3 \ar @<+4pt>[r]^{#4} \ar @<-4pt>[r]_{#5} & #6
}}
\newcommand{\coequalizer}[6]{\xymatrix@+1pc{
 #1 \ar @<+4pt>[r]^{#2} \ar @<-4pt>[r]_{#3} & #4 \ar[r]^{#5} & #6
}}
\newcommand{\sottoggetto}[2]{\xymatrix{
#1 \ar@{>->}[r] & #2
}}
\newcommand{\pullbackcorner}[1][ul]{\save*!/#1+1.2pc/#1:(1,-1)@^{|-}\restore}
\def\pr{\pi}
\def\id{\operatorname{ id}}
\def\op{\operatorname{ op}}
\def\dom{\operatorname{ dom}}
\def\mC{\mathcal{C}}
\def\mD{\mathcal{D}}
\def\Sub{\mathsf{Sub}}
\newcommand{\pca}[1]{\mathbb{#1}}
\newcommand{\subpca}[1]{\mathbb{#1}'}
\def\set{\mathsf{Set}}
\newcommand{\PD}{\mathsf{PD}}
\newcommand{\ED}{\mathsf{ED}}
\newcommand{\doctrine}[2]{#2 \colon #1^{\op}  \longrightarrow  \mathsf{InfSl} }
\newcommand{\function}[2]{\colon #1 \to #2}
\newcommand{\pfunction}[2]{:\subseteq #1 \to #2}
\newcommand{\ran}{\operatorname{ran}}
\newcommand{\powerset}{\raisebox{0.6mm}{\Large\ensuremath{\wp}}}
\newcommand{\Mpcadoctrine}{\mathfrak{T}}
\newcommand{\Mpcaorder}{\le_{\mathsf{T}}}
\newcommand{\MedvedevOrder}{\le_\mathrm{M}}
\newcommand{\MedvedevDoctrine}{\mathfrak{M}}
\newcommand{\barAsm}[1]{|#1|}
\newcommand{\relAsm}[1]{\Vdash_{#1}}
\newcommand{\extweireducible}{\le_{\mathsf{extW}}}
\newcommand{\EWeiElementary}{\mathfrak{eiR}}
\newcommand{\InstRedDoc}{\mathfrak{iR}}
\newcommand{\EWeiOrderElementary}{\le_\mathsf{eI}}
\newcommand{\EWeiDoctrineOrder}{\le_\mathsf{extW}}
\newcommand{\EWei}{\mathfrak{eW}}
\newcommand{\Assemblies}{\mathsf{Asm}}
\newcommand{\str}[1]{( #1 )}
\newcommand{\pairing}[1]{\langle #1 \rangle}
\newcommand{\coding}[1]{\langle #1 \rangle}
\newcommand{\concat}{\smash{\raisebox{.9ex}{\ensuremath\smallfrown} }}
\newcommand{\Baire}{\mathbb{N}^\mathbb{N}}
\newcommand{\partAsm}{\mathsf{ParAsm}}
\newcommand{\fprPCA}{\mathsf{p}_1}
\newcommand{\sprPCA}{\mathsf{p}_2}
\newcommand{\kPCA}{\mathsf{k}}
\newcommand{\sPCA}{\mathsf{s}}
\newcommand{\bkPCA}{\bar{\kPCA}}
\newcommand{\pair}{\mathsf{pair}}
\newcommand{\name}{\Vdash}
\title{A topos for extended Weihrauch degrees}
\author{Samuele Maschio and Davide Trotta}
\begin{document}

\begin{abstract}
    Weihrauch reducibility is a notion of reducibility between computational problems that is useful to calibrate the uniform computational strength of a multi-valued function.
It complements the analysis of mathematical theorems done in reverse mathematics, as multi-valued functions on represented spaces can be considered as realizers of theorems in a natural way. Despite the rich literature and the relevance of the applications of category theory in logic and realizability, actually there are just a few works starting to study the Weihrauch reducibility from a categorical point of view. The main purpose of this work is to provide a full categorical account of the notion of
extended Weihrauch reducibility introduced by A. Bauer, which generalizes the original notion
of Weihrauch reducibility. In particular, we present a tripos and a topos for extended Weihrauch degrees. We start by defining a new tripos, abstracting the notion of extended Weihrauch degrees,
and then we apply the tripos-to-topos construction to obtain the desired topos. Then we show that the Kleene-Vesley topos is a topos of $j$-sheaves for a certain Lawvere-Tierney topology over the
topos of extended Weihrauch degrees. 
\end{abstract}
\maketitle
\tableofcontents
\section{Introduction}
Categorical logic is the branch of mathematics in which tools and concepts from category theory are applied to the study of mathematical logic.
Nowadays, categorical methods are applied in many areas of mathematical logic. One area that has been less influenced by this categorical approach is computability and, in light of the long tradition of the application of categorical methods in realizability studies, see e.g. \cite{van_Oosten_realizability}, this seems to be quite peculiar. 
In recent years, there have been some works starting to approach computability-like notions from a categorical perspective. E.g.\ a first approach to Medvedev and Muchnik reducibility via hyperdoctrines has been introduced in \cite{Kuyper2015}, while Weihrauch reducibility for assemblies (or multi-represented spaces) has been introduced only very recently in \cite{Kihara2022rethinking, SchroederCCA2022} through the notion of realizer-based Weihrauch reducibility. In \cite{Bauer2021}, Bauer introduced an abstract notion of reducibility between predicates, called instance reducibility, which commonly appears in reverse constructive mathematics. In a relative realizability topos, the instance degrees correspond to a generalization of (realizer-based) Weihrauch reducibility, called \emph{extended Weihrauch degrees}, and the ``classical'' Weihrauch degrees \cite{BGP17} correspond precisely to the $\lnot\lnot$-dense modest instance degrees in Kleene-Vesley realizability. Upon closer inspection, it is not hard to check that realizer-based Weihrauch reducibility is a particular case of Bauer's notion.

Finally, in \cite{trottavalenti2025},  a categorical formulation of Turing, Medvedev, Muchnik, and Weihrauch reducibilities is presented using Lawvere doctrines, which are a generalization of the notion of \emph{hyperdoctrine} introduced by Lawvere ~\cite{lawvere1969,lawvere1969b,lawvere1970} to synthesize the structural properties of logical systems.

The main purpose of this work is to carry on this research line, focusing on the notion of extended Weihrauch degrees. Our main goal is to define a topos for extended Weihrauch degrees, providing a suitable universe for studying this reducibility categorically, in the same way as Hyland's Effective topos~\cite{hyland1982effective} is a suitable universe for studying realizability. Then, we take advantage of this categorical presentation, and we establish the precise connection between extended Weihrauch degrees and realizability. 

In detail: the main tools we adopt to define and construct such a topos is the so-called \emph{tripos-to-topos construction} introduced by Hyland, Johnstone, and Pitts  \cite{hyland89,pitts02}, producing a topos from a given tripos, and the \emph{(full) existential completion}, a construction that freely adds left adjoint, i.e.\ existential quantifiers, along all the morphisms of the base of a given doctrine, for defining the tripos abstracting extended Weihrauch degrees. Recall that a tripos is a specific instance of the notion of Lawvere hyperdoctrine, which has enough structure to deal with higher-order logic properly. The main reason for considering extended Weihrauch degrees is that they provide the structure of a tripos. This is not guaranteed by either of the intermediate notions: the usual Weihrauch degrees (i.e.\ $\neg\neg$-dense instance degrees) fail to yield Heyting algebras in the fibres \cite{HiguchiPauly}, while modest instance degrees \cite{Bauer2021} do form Heyting algebras but lack a generic predicate. Therefore, extended Weihrauch degrees are precisely what is needed to obtain a tripos.

Hence, we first define a doctrine $\doctrine{\partAsm(\pca{A},\subpca{A})}{\InstRedDoc}$ abstracting Bauer's notion of instance reducibility between realizability predicates, and we prove that it is a tripos (\thref{thm_tripos_ex_wei}). This doctrine is defined as the (full) existential completion  $\doctrine{\partAsm(\pca{A},\subpca{A})}{\EWeiElementary^{\exists}}$ of a more basic doctrine $\doctrine{\partAsm(\pca{A},\subpca{A})}{\EWeiElementary}$, following the same idea used in \cite{trottavalenti2025} for defining doctrines abstracting computability reducibility. 


Then we introduce a second doctrine $\doctrine{\partAsm(\pca{A},\subpca{A})}{\EWei}$ (\thref{def_ex_deg_doctrine}) which provides a direct categorification of  the notion of extended Weihrauch degrees and we prove that $\EWei\cong \InstRedDoc$ (\thref{thm_ex_descr_wei_doc}). This equivalence shows in particular that $\doctrine{\partAsm(\pca{A},\subpca{A})}{\EWei}$ is a tripos.

Notice that this result can be seen as a fibrational version of Bauer's result showing that  Weihrauch reductions and instance reductions are equivalent, see~\cite[Prop. 8]{Bauer2021}.  


Finally, once we have proved that the extended Weihrauch doctrine is a tripos, we can define the topos of extended Weihrauch degrees as the topos obtained by applying the tripos-to-topos to the tripos $\EWei$ and study the connections with (relative) realizability toposes. In particular, we prove that  the relative realizability topos $\mathsf{RT}[\pca{A},\pca{A}'] $ \cite{BIRKEDAL2002115} is equivalent to a topos $\mathsf{sh}_j(\mathsf{EW}[\pca{A},\pca{A}'])$ of $j$-sheaves  for a certain Lawvere-Tierney topology $j$ over $ \mathsf{EW}[\pca{A},\pca{A}']$ (\thref{cor_sheaves_real_top}). To prove this result, there are two factors playing a key role: first that the extended Weihrauch tripos is a (full) existential completion; second, that realizability toposes can be presented as exact completions of the category of partitioned assemblies~\cite{robinsonrosolini90}.  Both of these results rely on the use of the Axiom of Choice, which we will assume throughout this paper.

\section{Preliminaries}\label{sec_prel}
\subsection{Partial combinatory algebras
}\thlabel{sec:basic notions}

Realizability theory originated with Kleene’s interpretation of intuitionistic number theory~\cite{kleene1945} and has since then developed into a large body of work in logic and theoretical computer science. We focus on two basic flavors of realizability, number realizability, and function realizability, which were both due to Kleene. 

In this section, we recall some standard notions within realizability and computability. We follow the approach suggested by van Oosten~\cite{van_Oosten_realizability}, as we want to fix a suitable notation for both category theorists and computability logicians. 

We describe \emph{partial combinatory algebras} and discuss some important examples.  For more details, we refer the reader to van Oosten's work on categorical realizability (see \cite{van_Oosten_realizability} and the references therein). 

We start by introducing the basic concept of a \emph{partial applicative structure} or PAS, due to Feferman, which may be viewed as a universe for computation. 

\begin{definition}[PAS]
    A  \textbf{partial applicative structure}, or PAS for short, is a set $\pca{A}$ equipped with a 
    \textbf{partial} binary operation 
$\cdot\pfunction{\pca{A}\times \pca{A}}{\pca{A}}$.
\end{definition}

Some conventions and terminology: given two elements $a,b$ in $\pca{A}$, we think of $a\cdot b$ (which we often abbreviate $ab$) as  ``$a$ applied to $b$". The partiality of the operation $\cdot$ means that this application need not always be defined. We write $f\pfunction{A}{B}$ to say that $f$ is a partial function whose domain is a subset of $A$, $\dom(f)\subseteq A$, and range $\ran(f)\subseteq B$.
If $(a,b)\in \dom(\cdot)$, that is, if the application is defined, then we write $a\cdot b \downarrow$ or $ab\downarrow$. 

We usually omit brackets, assuming the associativity of application to the left. Thus $abc$ stands for $(ab)c$. Moreover, for two expressions $s$ and $s'$ we write $s \simeq  s'$ to indicate that $s$ is defined exactly when $s'$ is, in which case they are equal.

Even though these partial applicative structures do not possess many interesting properties (they have no axioms for application), they already highlight one of the key features of combinatorial structures, namely the fact that we have a domain of elements that can act both as functions and as arguments, just as in untyped $\lambda$-calculus. 
This behaviour can be traced back to von Neumann's idea that programs (functions, operations) live in the same realm and are represented in the same way as the data (arguments) that they act upon.
In particular, programs can act on other programs.

\begin{definition}[PCA]\thlabel{def: PCA}
    A \textbf{partial combinatory algebra} (PCA) is a PAS $\pca{A}$  for which there exist elements $\kPCA,\sPCA\in \pca{A}$ such that for all $a,b,c\in \pca{A}$ we have that
    \begin{enumerate}
    \item $\kPCA ab=a$
    \item $\sPCA ab \downarrow$
    \item $\sPCA abc \simeq ac(bc)$
    \end{enumerate}
\end{definition}

The elements $\kPCA$ and $\sPCA$ are generalizations of the homonymous combinators in Combinatory Logic. 

Every PCA $\pca{A}$ is combinatory complete in the sense of \cite{Feferman75,van_Oosten_realizability}, namely: for every term $t(x_1,\dots,x_{n+1})$ built from variables $x_1,\dots,x_{n+1}$, constants $c\in \pca{A}$, and application operator $\cdot$, there exists an element $a\in \pca{A}$ such that for all elements $b_1,\dots,b_{n+1}\in \pca{A}$ we have that $ab_1\cdots b_n \downarrow$ and $ab_1\cdots b_{n+1}\simeq t(b_1,\dots,b_{n+1})$. Such an element $a$ can be taken as $\lambda x_1...\lambda x_{n+1}.t$, once one has encoded $\lambda$-abstraction as follows:
\begin{enumerate}
\item $\lambda x.x:=\sPCA\kPCA\kPCA$;
\item $\lambda x.y:=\kPCA y$ if $y$ is a variable different from $x$;
\item $\lambda x.a:=\kPCA a$ if $a\in \mathbb{A}$;
\item $\lambda x.tt'=\sPCA(\lambda x.t)(\lambda x.t')$.
\end{enumerate}

In particular, we can use this result and the elements $\kPCA$ and $\sPCA$ to construct elements $\pair,  \fprPCA, \sprPCA $ of $\pca{A}$ so that $(a,b)\mapsto \pair \cdot a\cdot b$ is an injection of $\pca{A}\times \pca{A}$ into $\pca{A} $ with left inverse $c\mapsto (\fprPCA \cdot c,\sprPCA \cdot c)$. Hence, we can use $\pair ab$ as an element of $\pca{A}$ which encodes the pair $(a, b)$. For this reason, the elements $\pair, \fprPCA ,\sprPCA $ are usually called \emph{pairing} and \emph{projection} operators. For the sake of readability, we write $\pairing{a,b}$ in place of $\pair ab$ (as it is more customary in computability theory).

Using $\kPCA$ and $\sPCA$, we can prove the analogues of the Universal Turing Machine (UTM) and the SMN theorems in computability in an arbitrary PCA. 

If $X,Y\subseteq \mathbb{A}$, we denote with $X\otimes Y$ the set $\{\pairing{ x,y}|\,x\in X,y\in Y\}$ and with  $X\oplus Y$ the set $(\{\kPCA\}\otimes X)\cup (\{\bkPCA\}\otimes Y)$, where $\bkPCA$ is $\kPCA(\sPCA\kPCA\kPCA)$.

Next we recall the notion of \emph{elementary sub-PCA} (see for example \cite[Sec.\ 2.6.9]{van_Oosten_realizability}). 

\begin{definition}[elementary sub-PCA] \thlabel{def:sub-pca}
Let $\pca{A}$ be a PCA. A subset $\subpca{A}\subseteq \pca{A}$ is called an \textbf{elementary sub-PCA} of $\pca{A}$ if $\subpca{A}$ is a PCA with the partial applicative structure induced by $\pca{A}$, and the elements $\kPCA$ and $\sPCA$ as in \thref{def: PCA} can be found in $\pca{A}'$. In particular, it is closed under the application of $\pca{A}$, that means: if $a,b\in \subpca{A}$ and $ab\downarrow$ in $\pca{A}$ then $ab\in \subpca{A}$.
\end{definition}

In particular, elements of the sub-PCA will play the role of the computable functions.
From now on, when dealing with a PCA $\pca{A}$ with an elementary sub-PCA $\subpca{A}$, we will implicitly assume $\kPCA$ and $\sPCA$ to be in $\subpca{A}$, and that these are used to produce the derived combinators and lambda terms presented above.

\begin{example}[\emph{Kleene’s first model}]\label{ex: K_1}
Fix an effective enumeration $(\varphi_a)_{a\in\mathbb{N}}$ of the partial recursive functions $\mathbb{N}\to\mathbb{N}$ (i.e.\ a G\"odel numbering). The set $\pca{N}$ with partial recursive application $(a,b)\mapsto \varphi_a(b)$ is a PCA, and it is called \emph{Kleene’s first model} $\mathcal{K}_1$ (see e.g.\ \cite{Soare1987}).
\end{example}
The following example of a PCA is arguably the most relevant for computable analysis and the most extensively studied in the context of Weihrauch reducibility:
\begin{example}
[\emph{Kleene’s second model}] The PCA $\mathcal{K}_2$ is often used for function realizability \cite[Sec.\ 1.4.3]{van_Oosten_realizability}. This PCA is given by the Baire space $\pca{N}^{\pca{N}}$, endowed with the product topology. 
The partial binary operation of application $\cdot\pfunction{\pca{N}^{\pca{N}}\times \pca{N}^{\pca{N}}}{\pca{N}^{\pca{N}}} $ corresponds to the one used in Type-$2$ Theory of Effectivity \cite{Weihrauch00}.
This can be described as follows. Let $\alpha[n]$ denote the string $\str{\alpha(0),\hdots, \alpha(n-1)}$. 
Every $\alpha\in \Baire$ induces a function $F_{\alpha}\pfunction{\Baire}{\mathbb{N}}$ defined as $F_{\alpha}(\beta)=k$ if there is $n\in\mathbb{N}$ such that $\alpha(\coding{\beta[n]})=k+1$ and $(\forall m<n)(\alpha(\coding{\beta[m]})=0)$, and undefined otherwise. The application $\alpha\cdot \beta$ can then be defined as the map $n\mapsto F_{\alpha}(\str{n}\concat \beta)$, where $\str{n}\concat \beta$ is the string $\sigma$ defined as $\sigma(0):=n$ and $\sigma(k+1):=\beta(k)$.

When working with Kleene's second model, we usually consider the elementary sub-PCA $\mathcal{K}_2^{rec}$ consisting of all the $\alpha\in\Baire$ such that $\beta\mapsto \alpha\cdot \beta$ is computable. For more details and examples, also of non-elementary sub-PCAs for Kleene's second model, we refer to \cite{Oosten2011PartialCA}.
\end{example}


\subsection{Assemblies and partitioned assemblies}\label{sec:rep_spaces}
We briefly recall some useful definitions regarding the notion of \emph{assembly} and \emph{partitioned assembly} (in relative realizability), explaining how these can be seen as a categorification (and a generalization) of the notion of represented space. We refer to \cite{van_Oosten_realizability} for a complete presentation of these notions, and to \cite{Bauer2021,Frey2014AFS,HYLAND1988,BIRKEDAL2002115} for more specific applications. In the following definition, we mainly follow the notation used in \cite{Bauer2021}, which is closer to the usual one used in computability.

\begin{definition}[Assemblies and Partitioned assemblies]\thlabel{def:assemblies}
Let $\pca{A}$ be a PCA. An \textbf{assembly} is a pair $X=(\barAsm{X},\relAsm{X})$ where $\barAsm{X}$ is a set and $\relAsm{X}\subseteq \pca{A}\times \barAsm{X}$ is a total relation, 
i.e.\ $(\forall x\in \barAsm{X})(\exists r\in \pca{A})( r\relAsm{X}x)$. 
An assembly $X$  is \textbf{partitioned} if  $r\relAsm{X}x$ and  $r'\relAsm{X}x$ imply $r=r'$, i.e.\ every element of $X$ has exactly one name in $\mathbb{A}$.

\end{definition}


\begin{definition}[Morphism of assemblies]
Let $\pca{A}$ be a PCA and $\subpca{A}\subseteq \pca{A}$ be an elementary sub-PCA of $\pca{A}$.
A \textbf{morphism of assemblies} $f\function{X}{Y}$ is a function $f\function{\barAsm{X}}{\barAsm{Y}}$ which has a realizer in $\pca{A}'$, i.e.\ it is a function such that there exists an element $a\in \pca{A}'$ with the property that for every $r\relAsm{X}x$ it holds that $a\cdot r\downarrow$ and $a\cdot r\relAsm{Y} f(x)$.
\end{definition}

\begin{definition}
Let $\pca{A}$ be a PCA and $\subpca{A}$ be an elementary sub-PCA of $\pca{A}$. We define the category of \textbf{assemblies} $\Assemblies(\pca{A},\subpca{A})$ as the category 
having assemblies as objects and where arrows are morphisms of assemblies (with the usual set-theoretical composition of functions). 
The category of \textbf{partitioned assemblies} $\partAsm(\pca{A},\subpca{A})$ is the full subcategory of $\Assemblies(\pca{A},\subpca{A})$ whose objects are partitioned assemblies.
\end{definition}

\subsection{Primary and existential doctrines}
Lawvere introduced the notion of \emph{hyperdoctrine}~\cite{lawvere1969,lawvere1969b,lawvere1970} to synthesize in a categorical framework the structural properties of logical systems.

Several generalizations of such a notion have been considered recently, e.g. we refer to the works of Rosolini and Maietti \cite{maiettipasqualirosolini,maiettirosolini13a,maiettirosolini13b}. 

We start by recalling a simple generalization of the notion of hyperdoctrine:

\begin{definition}[primary doctrine]\label{def primary doctrine}
A \textbf{primary doctrine}~is a functor
$$\doctrine{\mC}{P}$$ from the opposite of a category $\mC$ with finite limits to the category $\mathsf{InfSl}$ of inf-semilattices. 
\end{definition}

Notice that, with respect to the original definition of primary doctrine, here we also require that the base category has finite limits.

\begin{example}[\cite{Hofstra2006AllRI}]\label{ex: primary doctrine pca}
Let $\pca{A}$ be a partial combinatory algebra (PCA). We can define a functor $\doctrine{\mathsf{Set}}{\pca{A}^{(-)}}$ assigning to a set $X$ the set $\pca{A}^X$ of functions from $X$ to $\pca{A}$. Given two elements $\alpha,\beta\in \pca{A}^X$,  we have that $\alpha\leq \beta$ if there exists an element $a\in \pca{A}$ such that for every $x\in X$ we have that $a\cdot \alpha(x)$ is defined and $a\cdot \alpha(x)=\beta(x)$.
\end{example}
\begin{example}[\cite{maiettirosolini13a}]
    Let $\mC$ be a category with finite limits. We can define the primary doctrine of weak subobjects $\doctrine{\mC}{\Psi}$ as the functor sending an object $X$ into the poset reflection of the category $\mC/X$ and acting by pullback on arrows. 
\end{example}

\begin{definition}[morphism of primary doctrines]\label{def:morphism of primary doctrine}
Let $\doctrine{\mC}{P}$ and $\doctrine{\mD}{R}$ be two primary doctrines. A \textbf{morphism} of primary doctrines is given by a pair $(F,\mathfrak{b})$ 

\[\begin{tikzcd}
   \mC^{\op} \\
   && \mathsf{InfSl} \\
   \mD^{\op}
   \arrow[""{name=0, anchor=center, inner sep=0}, "R"', from=3-1, to=2-3]
   \arrow[""{name=1, anchor=center, inner sep=0}, "P", from=1-1, to=2-3]
   \arrow["F^{\op}"', from=1-1, to=3-1]
   \arrow["\mathfrak{b}"', shorten <=4pt, shorten >=4pt, from=1, to=0]
\end{tikzcd}\]
where
\begin{itemize}
\item $\freccia{\mC}{F}{\mD}$ is a finite limits preserving functor;
\item $\freccia{ P}{\mathfrak{b}}{R\circ F^{\op}}$ is a natural transformation.
\end{itemize}
\end{definition}
\begin{definition}[doctrine transformation]\label{def:doctrine transformation}
Let $\doctrine{\mC}{P}$ and $\doctrine{\mD}{R}$ be two primary doctrines, $\freccia{P}{(F,\mathfrak{b}),(G,\mathfrak{c})}{R}$ be two primary morphisms of doctrines. A \textbf{doctrine transformation} is a natural transformation $\freccia{F}{\theta}{G}$ such that
\[ \mathfrak{b}_A(\alpha)\leq R_{\theta_A}(\mathfrak{c}_A(\alpha))\]
for every $\alpha$ of $P(A)$.
\end{definition}
Primary doctrines, morphisms of primary doctrines and doctrine transformations form a 2-category denoted by $\PD$.
\begin{definition}[existential doctrine]\label{def existential doctrine}
A primary doctrine $\doctrine{\mC}{P}$ is \textbf{(full) existential} if, for every object $A$ and $B$ in $\mC$ and for any arrow $\freccia{A}{f}{B}$, the functor
\[ \freccia{P(B)}{{P_{f}}}{P(A)}\]
has a left adjoint $\exists_{f}$, and these satisfy:

\begin{itemize}
    \item[(BCC)] the \textbf{Beck-Chevalley condition:} for any pullback diagram
\[\begin{tikzcd}[column sep=large, row sep=large]
	{A'} & {B'} \\
	A & B
	\arrow["{g'}"', from=1-1, to=2-1]
	\arrow["f"', from=2-1, to=2-2]
	\arrow["{f'}", from=1-1, to=1-2]
	\arrow["g", from=1-2, to=2-2]
	\arrow["\scalebox{1.6}{$\lrcorner$}"{anchor=center, pos=0.1}, shift left=3, draw=none, from=1-1, to=2-1]\end{tikzcd}\]
the canonical arrow
\[ \exists_{f'}P_{g'}(\alpha)\leq P_g \exists_{f}(\alpha)\]
is the identity for every element $\alpha$ of the fibre $P(A)$. 
\item[(FR)] \textbf{Frobenius reciprocity:} for any object $\alpha$ in $P(B)$ and $\beta$ in $P(A)$, the canonical arrow
\[ \exists_{f}(P_{f}(\alpha)\wedge \beta)\leq \alpha \wedge \exists_{f}(\beta)\]
in $P(A)$ is the identity.
\end{itemize}
\end{definition}
For the sake of clarity, in this work, we will use the adjective ``existential" for doctrines having left adjoints along \emph{all} the morphisms of the base category, satisfying (BCC). Hence, this is a particular case of the notion of elementary and existential doctrine used by Maietti and Rosolini in~\cite{maiettirosolini13a,maiettirosolini13b}.

\begin{definition}[existential morphism]
Let $\doctrine{\mC}{P}$ and $\doctrine{\mD}{R}$ be existential doctrines.
A morphism  of primary doctrines $(F,\mathfrak{b})$ is said to be \textbf{existential} if for every arrow $\freccia{A}{f}{B}$ of $\mC$ we have that 
$$\exists_{Ff}\mathfrak{b}_A(\alpha)=\mathfrak{b}_B(\exists_f(\alpha))$$ 
for every element $\alpha$ of $P(A)$.
\end{definition}
We denote the 2-category of existential doctrines, existential morphisms and doctrine transformations by $\ED$.
\begin{definition}[first order hyperdoctrine]\label{def:hyperdoctrine}
A \textbf{first order hyperdoctrine} is an existential doctrine $\doctrine{\mC}{P}$  such that
\begin{itemize}
\item for every object $A$ of $\mC$ the fibre $P(A)$ is a Heyting algebra, and for every arrow $\freccia{A}{f}{B}$ of $\mC$, $\freccia{P(B)}{P_f}{P(A)}$ is a morphism of Heyting algebras;

\item for any projection $\freccia{ A}{{\pr}}{B}$, the functor
\[ \freccia{P(B)}{{P_{\pr}}}{P(A)}\]
has a right adjoint $\forall_{\pr}$. 
\end{itemize}

\end{definition}
\begin{remark}\label{rem_BCC_forall}
    Notice that, for a first order hyperdoctrine, the right adjoints satisfy the Beck-Chevalley condition. Indeed, we can deduce the validity of this condition by the fact the left adjoints satisfy it.
    
    \end{remark}
\begin{example}[\cite{pitts02,hyland89}]\thlabel{ex:localic tripos}
Let $\mathsf{A}$ be a locale. The functor $\doctrine{\mathsf{Set}}{\mathsf{A}^{(-)}}$ assigning to a set $X$ the poset $\mathsf{A}^X$ of functions from $X$ to $\mathsf{A}$ with the point-wise order is a first order hyperdoctrine.
\end{example}

\begin{example}[\cite{pitts02,hyland89,Hofstra2006AllRI}]\thlabel{ex:realizability tripos}
Given a PCA $\pca{A}$ with elementary sub-PCA $\subpca{A}$, we can consider the relative realizability first order hyperdoctrine $\doctrine{\mathsf{Set}}{\mathcal{P}}$ 
over $\mathsf{Set}$: for each set $X$, the preorder $(\powerset(\pca{A})^X,\leq)$ is defined as the set of functions from $X$ to the powerset $\powerset(\pca{A})$ of $\pca{A}$ and, given two elements $\alpha$ and $\beta$ of $\powerset(\pca{A})^X$, we say that $\alpha\leq \beta$ if there exists an element $\ovln{a}\in \pca{A}'$ such that for all $x\in X$ and all $a\in \alpha (x)$, $\ovln{a}\cdot a$ is defined and it is an element of $\beta (x)$.  By standard properties of PCAs this relation is reflexive and transitive, i.e.\ it is a preorder.
Then $\mathcal{P} (X)$ is defined as the posetal reflection of $(\powerset(\pca{A})^X,\leq)$. 
The functor $\mathcal{P}$ acts by precomposition on arrows. 
\end{example}

\begin{example}[\cite{trottavalenti2025}]\thlabel{def:full medvedev doctrine}
    Given a PCA $\pca{A}$ with elementary sub-PCA $\subpca{A}$, the Medvedev doctrine $\doctrine{\set}{\MedvedevDoctrine}$ is defined as follows: for every set $X$ and every pair of functions $\varphi,\psi$ in $\powerset(\pca{A})^X$, we define 
    \begin{align*}
        \varphi \MedvedevOrder \psi :\iff & (\exists \ovln{a}\in \subpca{A})(\forall x \in X)(\forall b\in \psi(x))(\exists a \in \varphi(x))(\ovln{a}\cdot b = a)\\
            \iff & (\exists \ovln{a}\in \subpca{A})(\forall x \in X)(\ovln{a}\cdot\psi(x) \subseteq \varphi(x)).
    \end{align*}
 This preorder induces a partial order (by taking the ordinary poset reflection) on functions in $\powerset(\pca{A})^X$. The doctrine is then defined by precomposition on arrows. The name of this doctrine is motivated by the fact that $\MedvedevDoctrine (1)$ is a generalization of the notion of Medvedev degrees to PCAs. 
 This doctrine is existential, but it is not a first order hyperdoctrine as it lacks the implication in the fibres. Indeed, it is a co-Heyting algebra i.e.\ a Brouwer algebra, see \cite[Thm.\ 9.1]{Sorbi1996}. This doctrine is the dual, i.e.\ obtained by inverting the order in the fibres, of the relative realizability doctrine presented in \thref{ex:realizability tripos}, see \cite{trottavalenti2025}.
\end{example}

\begin{example}[\cite{maiettirosolinipasquali2019}]\thlabel{ex:weak_sub_partition_asm}
    The weak subobject doctrine $\doctrine{\partAsm (\pca{A})}{\Psi}$  on the category of partitioned assemblies is a first order hyperdoctrine. This is due to the fact that $\partAsm (\pca{A})$ has weak dependent products. See, e.g. \cite{menni2002}.
\end{example}

\subsection{Triposes}


The notion of \emph{tripos} was originally introduced in \cite{hyland89} by Hyland, Johnstone and Pitts, motivated by the desire to formally explain in what sense Higgs' description of localic toposes as \emph{A-valued sets}~\cite{higgs84} and Hyland's realizability toposes \cite{hyland1982effective} are instances of the same construction.

Drawing up on Lawvere's treatment of logic in terms of hyperdoctrines, a tripos can be presented as a first order hyperdoctrine equipped with \emph{weak power objects}.

\begin{definition}[weak predicate classifier]
A primary doctrine $\doctrine{\mC}{P}$ has a
\textbf{weak predicate classifier} (or generic element) if there exists an object $\Omega$ of $\mC$ together with an element $\sigma$ of $P(\Omega)$ such that for every object $A$ of $\mC$ and every $\alpha$ of $P(A)$ there exists a morphism $\freccia{A}{\{\alpha\}}{\Omega}$ such that $P_{ \{\alpha\}}(\sigma)=\alpha$.
\end{definition}
\begin{definition}[weak power objects]
A primary doctrine $\doctrine{\mC}{P}$ has
\textbf{weak power objects} if for every object $X$ of $\mC$ there exists an object $\mathrm{P}X$ and an element $\in_X$ of $P(X\times\mathrm{P}X)$ such that for every $\beta$ of $P(X\times Y)$ there exists an arrow $\freccia{Y}{\{\beta\}_X}{\mathrm{P}X}$ such that $\beta=P_{\id_X\times \{\beta\}_X}(\in_X)$.

\end{definition}
\begin{remark}\thlabel{rem:weak cart closed power ob iff pred clas}
Notice that if $\doctrine{\mC}{P}$ has weak power objects, then it has a weak predicate classifier given by $\mathrm{P}1$ and $\in_1$.
The converse holds when the base category is weakly cartesian closed \cite{hyland89}. In particular, if $\doctrine{\mC}{P}$ has a weak predicate classifier and the base category is weakly cartesian closed then, for every object $A$ of $\mC$, we can define an object $\mathrm{P}A:=\Omega^A$ and an element $\in_A:=P_{\mathsf{ev}}(\sigma)$ of $P(A\times \mathrm{P}A)$
where $\freccia{A\times \Omega^A}{\mathsf{ev}}{\Omega}$ is the evaluation arrow, and these assignments give to $P$ the structure of weak power objects.

\end{remark}
\begin{definition}[tripos]\label{def:tripos}
A first order hyperdoctrine $\doctrine{\mC}{P}$ is called a \textbf{tripos} if it has  weak power objects.
\end{definition}

\begin{example}
    The localic hyperdoctrine presented in \thref{ex:localic tripos} is a tripos,  In particular, the weak predicate classifier is the object $\Omega:=\mathsf{A}$  together with $\sigma:=\id_{\mathsf{A}}$.
\end{example}

\begin{example}[\cite{pitts02,hyland89}]
    The realizability hyperdoctrine presented in \thref{ex:realizability tripos} is a tripos. In particular, the weak predicate classifier is the object $\Omega:=\powerset(\pca{A})$  together with $\sigma:=\id_{\powerset(\pca{A})}$.
\end{example}
\begin{example}
 The Medvedev doctrine $\doctrine{\set}{\MedvedevDoctrine}$ presented in \thref{def:full medvedev doctrine} has a weak predicate classifier (and hence, weak power objects by \thref{rem:weak cart closed power ob iff pred clas}) given by the object $\Omega:=\powerset(\pca{A})$  together with $\sigma:=\id_{\powerset(\pca{A})}$.
\end{example}

\begin{example}\thlabel{ex:weak_sub_partition_asm_tripos}
    The weak subobject doctrine $\doctrine{\partAsm (\pca{A})}{\Psi}$  on the category of partitioned assemblies is a tripos. The existence of weak power objects follows from the fact that $\partAsm (\pca{A})$ is weakly cartesian closed and it has a generic proof~\cite{menni2002}. Indeed, the generic proof corresponds precisely to the notion of weak predicate classifier of $\doctrine{\partAsm (\pca{A})}{\Psi}$, and hence, by \thref{rem:weak cart closed power ob iff pred clas}, we have the existence of weak power objects for this doctrine.
\end{example}

\subsection{The full existential completion}\thlabel{sec_exist_comp}
Over the years, several authors observed that many triposes coming from the realizability world are instances of a free construction that freely adds left adjoints along all the morphisms of the base category of a doctrine, see e.g. the work by Hostra~\cite{Hofstra2006AllRI},  Frey~\cite{Frey2014AFS,Frey2020} and the work by Maietti and the second author~\cite{trottamaietti2020}.

In the recent work~\cite{trottavalenti2025} it turns out that also many doctrines coming from the computability world are instances of these kind of completions that freely add left and/or right adjoints to a given doctrine. 

In this section, we quickly recall the free construction called full existential completion in \cite{trottamaietti2020} and $\exists$-completion in \cite{Frey2020} that adds left adjoints to a doctrine, along all the morphisms of the base. This is a particular case of the most general construction presented in \cite{trotta2020}, which freely adds left adjoints along a given class of arrows (closed under pullbacks, compositions and identities) to a given doctrine.

We start by quickly recalling the full existential completion from \cite{trotta2020,trottamaietti2020}.

\noindent
\textbf{Full existential completion}. 
Let $\doctrine{\mC}{P}$ be a primary doctrine where $\mC$ is a category with all finite limits.  For every object $A$ of $\mC$ consider the following preorder:
\begin{itemize}
\item \textbf{objects:} pairs $(\arrowup{B}{f}{A},\alpha)$, where $\arrow{B}{f}{A}$ is an arrow of $\mC$ and $\alpha\in P(B)$.
\item \textbf{order:}  $(\arrowup{B}{f}{A},\alpha)\leq (\arrowup{C}{g}{A},\beta)$ if there exists an arrow $\arrow{B}{h}{C}$ of $\mC$ such that the diagram 
\[\begin{tikzcd}
	& B \\
	C & A
	\arrow["h"', dashed, from=1-2, to=2-1]
	\arrow["f", from=1-2, to=2-2]
	\arrow["g"', from=2-1, to=2-2]
\end{tikzcd}\]
commutes and
\[ \alpha\leq P_{h}(\beta).\]

\end{itemize}
We denote by
$P^{\exists}(A)$ the partial order obtained by identifying two
objects when
$$(\arrowup{B}{h}{A}, \alpha)\gtreqless (\arrowup{D}{f}{A}, \gamma)$$
in the usual way. With a small abuse of notation, we identify an equivalence class with one of its representatives.

Given a morphism $\arrow{A}{f}{B}$ in $\mC$, let $P^{\exists}_f(\arrowup{C}{g}{B},\beta)$ be the object 
\[(\arrowup{D}{\pbmorph{f}{g}}{A},\; P_{g^*f}(\beta) )\]
where $f^*g$ and $g^*f$ are defined by the pullback
\[\xymatrix@+1pc{
D\ar[d]_{f^*g} \pullbackcorner \ar[r]^{g^*f} &C\ar[d]^g\\
A \ar[r]_f & B.
}\]
\begin{theorem}\label{exists}\thlabel{thm: compex P is existential and RC}
The doctrine $\doctrine{\mC}{P^{\exists}}$ is an existential doctrine.
\end{theorem}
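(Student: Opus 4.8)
The plan is to check, in order, that $\compex{P}$ is a primary doctrine and then that it meets the three requirements of \thref{def existential doctrine}: existence of left adjoints, (BCC) and (FR). The only geometric tool I would use throughout is the pasting lemma for pullbacks in $\mC$, together with the functoriality $P_{hk}=P_k P_h$ of the base doctrine. First I would exhibit the inf-semilattice structure on each fibre $\compex{P}(A)$. The top element is the class of $(\arrowup{A}{\id_A}{A},\top)$, and the meet of $(\arrowup{B}{f}{A},\alpha)$ and $(\arrowup{C}{g}{A},\beta)$ is computed by forming the pullback $B\times_A C$ of $f$ and $g$, with projections $\pi_B,\pi_C$, and setting it to be $(\arrowup{B\times_A C}{f\pi_B}{A},\, P_{\pi_B}(\alpha)\wedge P_{\pi_C}(\beta))$. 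That this is the greatest lower bound is immediate from the universal property of the pullback: a common lower bound $(\arrowup{D}{e}{A},\gamma)$ supplies maps into $B$ and into $C$ commuting over $A$, hence a unique map into $B\times_A C$, and transporting $\gamma$ along it turns the two witnessing inequalities into the single required one. In the same style I would verify that $\compex{P}_f$ is independent of the chosen representatives and of the chosen pullbacks, is functorial, and preserves top and binary meets, each being a direct application of pasting.

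The conceptual core is that the left adjoint to $\compex{P}_f$ along $\freccia{A}{f}{B}$ is simply post-composition of the display map,
\[ \exists_f(\arrowup{C}{g}{A},\gamma) := (\arrowup{C}{fg}{B},\gamma). \]
To prove $\exists_f\dashv \compex{P}_f$ I would unfold both sides of the adjunction inequality. Recall that $\compex{P}_f(\arrowup{D}{h}{B},\delta)=(\arrowup{E}{\pbmorph{f}{h}}{A},\,P_{\pbmorph{h}{f}}(\delta))$ for the pullback $E=A\times_B D$ of $f$ and $h$. A witness $k\colon C\to D$ for $(\arrowup{C}{fg}{B},\gamma)\le(\arrowup{D}{h}{B},\delta)$, that is, $hk=fg$ with $\gamma\le P_k(\delta)$, is exactly a cone over the cospan $f,h$ with tip $C$, hence the same datum as a map $l\colon C\to E$ with $\pbmorph{f}{h}\circ l = g$ and $\pbmorph{h}{f}\circ l = k$; under this bijection $\gamma\le P_k(\delta)$ becomes $\gamma\le P_{(\pbmorph{h}{f})l}(\delta)$, which is precisely the witness for $(\arrowup{C}{g}{A},\gamma)\le \compex{P}_f(\arrowup{D}{h}{B},\delta)$. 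This establishes the adjunction.

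Finally, (BCC) and (FR) both collapse to pasting. For (BCC), applied to $(\arrowup{C}{e}{A},\gamma)$, pasting the defining pullback of $\compex{P}_{g'}$ on top of the given pullback square shows that $\compex{P}_g\exists_f$ and $\exists_{f'}\compex{P}_{g'}$ each produce a display map out of one and the same pullback of $g$ along $fe$, with identical predicate component obtained by reindexing $\gamma$ along the projection to $C$; hence the canonical comparison is an equality, a fortiori an isomorphism. For (FR), with $\alpha=(\arrowup{D}{h}{B},\delta)$ and $\beta=(\arrowup{C}{g}{A},\gamma)$, pasting identifies the pullback computing $\compex{P}_f(\alpha)\wedge\beta$ over $A$ with the pullback $C\times_B D$ computing $\alpha\wedge\exists_f(\beta)$ over $B$; the post-composition $\exists_f$ then matches the two display maps, using $f\circ \pbmorph{f}{h}=h\circ\pbmorph{h}{f}$, while the predicate components agree as the same meet of reindexed copies of $\delta$ and $\gamma$.

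The step I expect to be most delicate is not any individual diagram chase but the systematic bookkeeping of well-definedness: meets, reindexing, $\exists_f$, and the (BCC)/(FR) comparisons must all be shown to descend to the poset reflection, i.e.\ to respect the equivalence $\gtreqless$ and to be insensitive to the choice of pullbacks, which exist only up to canonical isomorphism. Each such check again reduces to the pasting lemma and the naturality of $P$, but keeping the directions of the mediating arrows consistent is where the real care lies.
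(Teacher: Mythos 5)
Your proof is correct. The paper itself gives no proof of this theorem: it is recalled from the cited works on the (full) existential completion, and your argument — top and meets via pullbacks, $\exists_f$ as post-composition of the display map, the adjunction via the universal property of the pullback, and (BCC)/(FR) via the pasting lemma, with the well-definedness checks on the poset reflection — is exactly the standard argument found there.
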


The previous construction extends to a 2-adjunction from the 2-category of primary doctrines to the 2-category of existential doctrines. We refer to \cite{trotta2020} for a complete analysis of this construction.

\begin{theorem}\thlabel{thm:full existential comp}
The assignment $P\mapsto P^{\exists}
$ extends to a 2-functor 
	\[\begin{tikzcd}
		\PD && \ED
		\arrow[""{name=0, anchor=center, inner sep=0}, "(-)^{\exists}",curve={height=-14pt}, from=1-1, to=1-3]
		\arrow[""{name=1, anchor=center, inner sep=0}, " ",curve={height=-14pt}, hook',from=1-3, to=1-1]
		\arrow["\dashv"{anchor=center, rotate=-90}, draw=none, from=1, to=0]
	\end{tikzcd}\]
	from the 2-category $\PD$ of primary doctrines to the 2-category $\ED$ of existential doctrines, and it is left adjoint to the inclusion functor.
    
\end{theorem}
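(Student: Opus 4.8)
The plan is to establish the $2$-adjunction $\compex{(-)} \dashv U$, where $U\colon \ED \to \PD$ is the forgetful inclusion, by exhibiting the unit and verifying its universal property; the $2$-functoriality of $\compex{(-)}$ then follows formally from uniqueness in that property (together with the explicit description of the action on cells). Note that \thref{thm: compex P is existential and RC} already supplies the fact that each $\compex{P}$ is an existential doctrine, so I may use the existential structure of $\compex{P}$ freely. First I would define, for each primary doctrine $\doctrine{\mC}{P}$, the unit $\freccia{P}{\eta_P}{\compex{P}}$ to be the morphism of primary doctrines whose base functor is $\id_{\mC}$ and whose natural transformation has components
\[ (\eta_P)_A\colon P(A) \to \compex{P}(A), \qquad \alpha \mapsto (\arrowup{A}{\id_A}{A},\alpha). \]
A direct computation with the pullback formula for $\compex{P}_f$ shows that $\eta_P$ commutes with reindexing and preserves fibrewise meets, hence is a well-defined $\PD$-morphism.

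The structural observation that drives everything is that $\eta_P$ generates $\compex{P}$ under existential quantification. Since the left adjoint of $\compex{P}$ along $f\colon B\to A$ acts by postcomposition, $\exists_f(\arrowup{C}{g}{B},\gamma)=(\arrowup{C}{f\circ g}{A},\gamma)$, one has for every $\alpha\in P(B)$ the identity
\[ (\arrowup{B}{f}{A},\alpha) = \exists_f\big((\eta_P)_B(\alpha)\big). \]
Consequently, given any existential doctrine $\doctrine{\mD}{R}$ and any primary morphism $\freccia{P}{(F,\mathfrak{b})}{UR}$, any existential morphism $(F,\ovln{\mathfrak{b}})\colon \compex{P}\to R$ satisfying $(F,\ovln{\mathfrak{b}})\circ \eta_P = (F,\mathfrak{b})$ is forced, by existentiality and the factorization condition, to obey
\[ \ovln{\mathfrak{b}}_A(\arrowup{B}{f}{A},\alpha) = \ovln{\mathfrak{b}}_A\big(\exists_f (\eta_P)_B(\alpha)\big) = \exists_{Ff}\big(\ovln{\mathfrak{b}}_B((\eta_P)_B(\alpha))\big) = \exists_{Ff}\big(\mathfrak{b}_B(\alpha)\big), \]
which yields uniqueness of the extension.

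For existence, I would take the displayed formula as the definition of $\ovln{\mathfrak{b}}$ and check that $(F,\ovln{\mathfrak{b}})$ is a genuine existential morphism. Monotonicity and well-definedness on equivalence classes follow by transporting an inequality $(\arrowup{B}{f}{A},\alpha)\leq(\arrowup{C}{g}{A},\beta)$, witnessed by some $h\colon B\to C$ with $gh=f$ and $\alpha\leq P_h(\beta)$, through $\mathfrak{b}$ and the adjunction $\exists_{Ff}\dashv R_{Ff}$. The substantive verifications are three, and I expect them to be the main obstacle, since they are exactly where the hypotheses on the target $R$ are consumed: naturality of the family $\ovln{\mathfrak{b}}$ unwinds $\compex{P}_f$ into a pullback in $\mC$, which the finite-limit-preserving $F$ carries to a pullback in $\mD$, and then reduces to commuting $\exists$ past reindexing in $R$, i.e.\ to the Beck--Chevalley condition for $R$; preservation of finite meets, after using that $\mathfrak{b}$ itself preserves meets and commutes with reindexing, reduces to the standard identity expressing a binary meet of two existential quantifications over a pullback, which holds in any existential doctrine by Frobenius reciprocity together with (BCC); and existentiality is immediate from the postcomposition description of $\exists_f$ in $\compex{P}$ and the functoriality of $\exists$ along composites in $R$.

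Finally, to upgrade the resulting bijection on $1$-cells to a $2$-adjunction, I would verify that precomposition with $\eta_P$ is an isomorphism of categories $\ED(\compex{P},R)\cong \PD(P,UR)$, $2$-natural in $P$ and $R$. On $2$-cells this amounts to showing that a doctrine transformation $\theta\colon (F,\mathfrak{b})\Rightarrow(G,\mathfrak{c})$ between primary morphisms $P\to UR$ is, with the same underlying natural transformation $\theta\colon F\Rightarrow G$, a doctrine transformation between the extensions $(F,\ovln{\mathfrak{b}})$ and $(G,\ovln{\mathfrak{c}})$, and conversely; the defining inequality $\ovln{\mathfrak{b}}_A\leq R_{\theta_A}\ovln{\mathfrak{c}}_A$ reduces, via the generating formula, naturality of $\theta$, and (BCC), to the given inequality $\mathfrak{b}_B\leq R_{\theta_B}\mathfrak{c}_B$ on the generators $(\eta_P)_B(\alpha)$. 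The action of $\compex{(-)}$ on a $\PD$-morphism $(F,\mathfrak{b})\colon P\to Q$ is then the unique existential extension of $\eta_Q\circ(F,\mathfrak{b})$, namely the morphism with base $F$ and $\mathfrak{b}^{\exists}_A(\arrowup{B}{f}{A},\alpha)=(\arrowup{FB}{Ff}{FA},\mathfrak{b}_B(\alpha))$, and preservation of identities and composition in both dimensions follows at once from the uniqueness clause of the universal property.
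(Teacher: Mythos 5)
Your proposal is correct. The paper itself gives no proof of this theorem---it defers entirely to the reference [trotta2020]---so there is nothing internal to compare against, but your argument is exactly the standard universal-property proof one expects for a free completion: the unit $\alpha\mapsto(\id_A,\alpha)$, the generation formula $(\arrowup{B}{f}{A},\alpha)=\exists_f(\eta_P)_B(\alpha)$ forcing uniqueness, the explicit extension $\ovln{\mathfrak{b}}_A(f,\alpha)=\exists_{Ff}(\mathfrak{b}_B(\alpha))$ whose naturality consumes preservation of pullbacks by $F$ and (BCC) in $R$, and the lift of the correspondence to $2$-cells. The only cosmetic remark is that the $2$-cell verification needs only the unit of $\exists_{Gf}\dashv R_{Gf}$ and naturality of $\theta$, not (BCC); everything else checks out.
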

\begin{remark}\thlabel{rem:universal completion}
    It is worth to observe that, combining the existential completion with the functor $\freccia{\mathsf{Pos}}{(-)^{\op}}{\mathsf{Pos}}$ inverting the order of a poset, one can define also the \emph{universal completion} of a functor $P\colon\mC^{\op}\to\mathsf{Pos}$ as $P^{\forall}:=((P^{\op})^{\exists})^{\op}$, see \cite{trotta23TCS,trotta-lfcs2022}.
\end{remark}
\begin{example}
    Realizability doctrines are relevant examples of doctrines arising as full existential completions. The original observation of this result is due to Hofstra \cite{Hofstra2006AllRI}, while a more general analysis of doctrines arising as full existential completions can be found in \cite{trottamaietti2020,Frey2020}.
\end{example}
\begin{example}[\cite{trottamaietti2020}]\thlabel{ex_weaksub_is_full_ex_comp}
    Every weak subobject doctrine $\doctrine{\mC}{\Psi}$ is an instance of the full existential completion. In particular, $\Psi$ is the existential completion $\mathsf{t}^{\exists}$ of the trivial doctrine $\doctrine{\mC}{\mathsf{t}}$ sending every object of $\mC$ into the poset $\mathsf{t}(X):=\{\bullet\}$ with just one element.
\end{example}


\section{Instance reducibility and extended Weihrauch degrees}\label{sec:extendedWeihrauch}
Recently, Bauer \cite{Bauer2021} introduced another generalization of Weihrauch reducibility, called \emph{extended Weihrauch reducibility}, that can be seen as another way to generalize Weihrauch reducibility to multi-represented spaces. Moreover, he proved that this notion is equivalent to a second notion, called  \emph{instance reducibility},  which commonly appears in reverse constructive mathematics.

We start by recalling the main definitions from \cite{Bauer2021}:

\begin{definition}[Extended Weihrauch reducibility {\cite[Def.\ 3.7]{Bauer2021}}]
    \thlabel{def:extwei_relation}
    Let $\pca{A}$ be a  PCA, and $\pca{A}'$ be an elementary sub-PCA of $\pca{A}$. An \textbf{extended Weihrauch predicate} is a function $f\function{\pca{A}}{\powerset\powerset(\pca{A})}$.

    Given two extended Weihrauch predicates $f,g$, we say that $f$ is \textbf{extended-Weihrauch reducible} to $g$, and write $f \extweireducible g$ if there are $\ell_1, \ell_2\in \subpca{A}$ such that for every $p\in \pca{A}$ such that $f(p)\neq \emptyset$
\begin{itemize}
    \item $\ell_1\cdot p\downarrow$ and $g(\ell_1\cdot p)\neq \emptyset$;
    \item for every $A \in f(p)$ there is $B \in g(\ell_1\cdot p)$ such that for every $q \in B$, $\ell_2\cdot \pairing{p,q} \downarrow$ and $\ell_2\cdot \pairing{p,q} \in A$.
\end{itemize}
\end{definition}

Following the notation used in \cite{Bauer2021}, we denote by $\mathsf{Pred}(X):=\powerset (\pca{A})^{|X|}$ the set of \textbf{realizability predicates} of an assembly $X$. 
\begin{definition}[Instance reducibility]\thlabel{def:instance_reducibility_asm}
    A realizability predicate $\phi \in \mathsf{Pred}(X)$ is instance reducible to $\psi \in \mathsf{Pred}(Y) $ when there exist $l_1,l_2\in \pca{A}'$ such that for every $s\in \pca{A}$ and $x\in |X|$, if $s\name_X x$ then there exists an element $y\in |Y|$ such that $l_1\cdot s\name_Y y$ and for every $p\in \psi (y)$, $l_2\cdot \pairing{s, p}\in \phi (x)$.
\end{definition}
\begin{lemma}\thlabel{lemma:  instance red}
    Every realizability predicate $\phi \in \mathsf{Pred}(X)$ of an assembly $X$ is equivalent (w.r.t. instance reducibility) to a realizability predicate  $\alpha_{\phi} \in \mathsf{Pred}(X_{\phi})$
    of a partitioned assembly $X_{\phi}$. In particular, $|X_{\phi}|:=\{(x,s)| x\in |X|, s\name_X x\} $, $s\name_{X_{\phi}} (x,s)$, and $\alpha_{\phi}(x,s):=\{\pairing{s,q}|q\in \phi(x)\}$ works.
\end{lemma}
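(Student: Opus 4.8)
The plan is to first check that the stated data genuinely defines a partitioned assembly together with a predicate on it, and then to establish the equivalence by proving instance reducibility (in the sense of \thref{def:instance_reducibility_asm}) in both directions, exhibiting in each case explicit realizers drawn from $\subpca{A}$. Observe that $X_\phi$ is indeed partitioned: the relation $\name_{X_\phi}$ is total, since by construction every $(x,s)\in |X_\phi|$ carries the name $s$ (and $(x,s)$ lies in the carrier precisely when $s\name_X x$), and it is single valued, as $s$ is declared to be the unique name of $(x,s)$. Consequently $\alpha_\phi$ is a well-defined element of $\mathsf{Pred}(X_\phi)=\powerset(\pca{A})^{|X_\phi|}$.

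For the reduction of $\phi$ to $\alpha_\phi$, given $s\name_X x$ I would choose the instance $y:=(x,s)$ and set $l_1:=\lambda u.\,u$, so that $l_1\cdot s=s\name_{X_\phi}(x,s)$. Since every element of $\alpha_\phi(x,s)$ has the form $\pairing{q,s}$ with $q\in\phi(x)$, I would take $l_2$ to be the term computing $\fprPCA(\sprPCA w)$ on input $w$; then $l_2\cdot\pairing{s,\pairing{q,s}}=q\in\phi(x)$, as required. For the converse reduction of $\alpha_\phi$ to $\phi$, a name $s\name_{X_\phi}(x,s')$ forces $s=s'$ by partitionedness, so $s\name_X x$; here I would choose $y:=x$, again set $l_1:=\lambda u.\,u$, and take $l_2$ to be the swap term sending $w$ to $\pairing{\sprPCA w,\fprPCA w}$, so that for $p\in\phi(x)$ we get $l_2\cdot\pairing{s,p}=\pairing{p,s}\in\alpha_\phi(x,s)$. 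All four realizers are $\lambda$-definable from $\kPCA$, $\sPCA$ and the projection operators $\fprPCA,\sprPCA$, hence belong to the elementary sub-PCA $\subpca{A}$.

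I expect no genuine obstacle in this argument: the passage to $X_\phi$ is the standard device of splitting each element of $X$ into one copy per realizer, and the only points requiring care are the routine pairing and projection bookkeeping together with the remark that the three maps used---the identity, the second-then-first projection, and the pair-swap---are computable, hence lie in $\subpca{A}$. Combining the two reductions yields the asserted equivalence of $\phi$ and $\alpha_\phi$.
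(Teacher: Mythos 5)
Your proof is correct and follows essentially the same route as the paper: both directions are established by exhibiting explicit instance reductions with $l_1$ the identity and $l_2$ a projection/pairing term built from $\fprPCA,\sprPCA$. In fact your choice of the swap term $\lambda w.\pairing{\sprPCA w,\fprPCA w}$ in the second reduction is more careful than the paper's, which asserts that the identity suffices there even though $\pairing{s,p}$ need not lie in $\alpha_\phi(x,s)=\{\pairing{q,s}\mid q\in\phi(x)\}$.
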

\begin{proof}
    We first show that $\phi$ is instance reducible to $\alpha_{\phi}$. We claim that $l_1$ is the identity and $l_2=\lambda x.\sprPCA(\sprPCA x)$. Let us consider $x\in |X|$ and $s\name_X x$. Then we choose $(x,s)\in X_{\phi}$ and, by definition, $s\name_{X_{\phi}} (x,s)$. Now, for every $p\in \alpha_{\phi}(x,s)$, i.e.\ for every $\pairing{s,q}$ with $q\in \phi(x)$, we have that $l_2\cdot \pairing{s,\pairing{s,q}}\in \phi (x)$. So, we can conclude that $\phi$ is instance reducible to $\alpha_{\phi}$.

    Now we prove that $\alpha_{\phi}$ is instance reducible to $\phi$. We claim that $l_1$ the identity and $l_2$ the identity provide us the reducibility. For $(x,s)\in |X_{\phi}|$, and $s\name_{X_{\phi}} (x,s)$, we chose $x\in |X|$ and, by definition $s\name_X x$. Now, for every $q\in \phi(x)$, $\pairing{s,q}\in \alpha_{\phi}(x,s) $ by definition. This concludes the proof.
\end{proof}

Therefore, the poset reflection of realizability predicates over assemblies is equivalent to that of realizability predicates over partitioned assemblies. 
\begin{remark}\thlabel{rem_instance_reduced_to_parAsm}
    
Notice that in the case of a realizability predicate over partitioned assemblies, \thref{def:instance_reducibility_asm} can be presented in the following way. 

Let $X,Y$ be partitioned assemblies. A realizability predicate $\phi \in \mathsf{Pred}(X)$ is instance reducible to $\psi \in \mathsf{Pred}(Y) $ when there exist a morphism of partitioned assemblies $H\function{X}{Y}$ and $l\in \pca{A}'$ such that for every $x\in |X|$ and $s\name_X x$, we have that for every $p\in \psi (H(x))$, $l\cdot \pairing{s, p}\in \phi (x)$.
\end{remark}

We recall that Weihrauch reductions and instance reductions can be proved to be equivalent preorders (see \cite{Bauer2021}). 

\begin{proposition}\thlabel{prop_ext_weih_equiv_instance_reduction}
   Weihrauch reductions and instance reductions in $\partAsm(\pca{A},\subpca{A})$ are equivalent preorders.
\end{proposition}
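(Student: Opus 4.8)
The plan is to set up an explicit dictionary between extended Weihrauch predicates and realizability predicates over partitioned assemblies, and to show that under this dictionary the relation $\extweireducible$ of \thref{def:extwei_relation} corresponds exactly to instance reducibility in the form of \thref{rem_instance_reduced_to_parAsm}. Concretely, to an extended Weihrauch predicate $\freccia{\pca{A}}{f}{\powerset\powerset(\pca{A})}$ I associate the partitioned assembly $X_f$ with underlying set $\barAsm{X_f}:=\{(p,A)\mid p\in\pca{A},\ A\in f(p)\}$ and naming relation $p\name_{X_f}(p,A)$, together with the realizability predicate $\phi_f\in\mathsf{Pred}(X_f)$ given by $\phi_f(p,A):=A$. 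Conversely, to a partitioned assembly $X$ with predicate $\phi\in\mathsf{Pred}(X)$ I associate the extended Weihrauch predicate $f_{X,\phi}(p):=\{\phi(x)\mid x\in\barAsm{X},\ p\name_X x\}$. I would then prove that the first assignment induces an equivalence of the two preorders by checking that it is monotone, order-reflecting, and essentially surjective, since an order-embedding of poset reflections that is moreover surjective is an isomorphism.

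The heart of the argument is the biconditional $f\extweireducible g$ if and only if $\phi_f$ is instance reducible to $\phi_g$. For the forward direction, given witnesses $\ell_1,\ell_2\in\subpca{A}$ for $f\extweireducible g$, I define $\freccia{X_f}{H}{X_g}$ by choosing, for each $(p,A)\in\barAsm{X_f}$, some $B\in g(\ell_1\cdot p)$ with $\ell_2\cdot\pairing{p,q}\in A$ for all $q\in B$ (such a $B$ exists by \thref{def:extwei_relation}, since $f(p)\neq\emptyset$ forces $\ell_1\cdot p\downarrow$ and $g(\ell_1\cdot p)\neq\emptyset$), and I set $H(p,A):=(\ell_1\cdot p,B)$. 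Because the unique name of $(p,A)$ is $p$ and the unique name of $H(p,A)$ is $\ell_1\cdot p$, the element $\ell_1$ realizes $H$, so $H$ is a morphism of partitioned assemblies; then $l:=\ell_2$ witnesses instance reducibility, as $\phi_g(H(p,A))=B$ and $\phi_f(p,A)=A$. For the converse, suppose $H$ and $l$ witness instance reducibility and let $\ell_1\in\subpca{A}$ realize $H$. The crucial observation is that, since $X_g$ is partitioned, $H(p,A)$ has a unique name, which must equal $\ell_1\cdot p$; hence $H(p,A)=(\ell_1\cdot p,B)$ for some $B\in g(\ell_1\cdot p)$, and its first component depends only on $p$. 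Reading off the instance-reducibility condition with $s=p$ then yields precisely the two clauses of $\extweireducible$ with $\ell_2:=l$.

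Finally, for essential surjectivity I would observe that $f_{X_f,\phi_f}=f$ on the nose, and that $(X_{f_{X,\phi}},\phi_{f_{X,\phi}})$ is equivalent to $(X,\phi)$ in the instance-reducibility preorder. In one direction the map $x\mapsto(s_x,\phi(x))$, where $s_x$ is the unique name of $x$, is realized by the identity of $\subpca{A}$; in the other direction one chooses for each $(p,A)$ some $x$ with $\phi(x)=A$ and $p\name_X x$ (possible as $A\in f_{X,\phi}(p)$), again realized by the identity; and in both cases the second projection $\sprPCA$ serves as the element $l$, exactly as in the computation of the preceding Lemma. Combining the biconditional with this equivalence, the assignment $f\mapsto(X_f,\phi_f)$ descends to an isomorphism of poset reflections, which is the claimed equivalence of preorders.

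I expect the main obstacle to be the converse direction of the biconditional: one must argue that the genuinely non-uniform data packaged in a morphism $H$ of partitioned assemblies — which is merely a set-function carrying some realizer — nevertheless collapses to the uniform reduction data $\ell_1,\ell_2$ demanded by $\extweireducible$. The key is that partitioned-ness forces the name of $H(p,A)$ to be $\ell_1\cdot p$, so the first-component map is automatically uniform in $p$, while the freedom to let $B$ depend on the element $(p,A)$ rather than only on its name $p$ matches exactly the ``for every $A\in f(p)$ there is $B$'' quantifier structure in \thref{def:extwei_relation}. A secondary point to handle carefully is the appeal to choice when defining $H$ in the forward direction.
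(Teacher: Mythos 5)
Your proof is correct, and the dictionary you build ($f\mapsto (X_f,\phi_f)$ with $\barAsm{X_f}=\{(p,A)\mid A\in f(p)\}$ named by $p$, and conversely $f_{X,\phi}(p)=\{\phi(x)\mid p\name_X x\}$) is exactly the correspondence the paper itself uses — the paper leaves this proposition to Bauer's Prop.~8 but proves the fibrewise generalization in \thref{thm_ex_descr_wei_doc} via the maps $F_{(X,\varphi)}$ and $G_{(X,\varphi)}$, of which your argument is the restriction to the fibre over the terminal object. Your key observation in the converse direction (partitionedness forces the name of $H(p,A)$ to be $\ell_1\cdot p$, making the first component uniform in $p$) and your explicit use of choice to select $B$ both appear, in the same form, in the paper's proof of that theorem.
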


\section{A tripos for instance reducibility}
Now we explain how the notion of instance reducibility can be presented in the language of doctrines.

\begin{definition}[elementary instance reducibility doctrine]
    We define the \textbf{elementary instance reducibility doctrine} $\doctrine{\partAsm(\pca{A},\subpca{A})}{\EWeiElementary}$ as follows: for every object $X=(|X|,\name_X)$ of $\partAsm(\pca{A},\subpca{A})$, the objects of $\EWeiElementary(X)$ are functions $f\function{|X|}{\powerset(\pca{A})}$. For every pair of maps $f,g$ in $\powerset(\pca{A})^{|X|}$, we define $f \EWeiOrderElementary g$ iff there is $\ovln{h}\in \subpca{A}$ such that for $p\name_X x$
    \[ (\forall q \in g(x))(\ovln{h}\cdot \pairing{p,q}\in f(x)).\]
    As usual, $\EWeiElementary(X)$ is the posetal reflection of $(\powerset(\pca{A})^{|X|},\EWeiOrderElementary)$, and the action of $\EWeiElementary$ on the morphisms of $\partAsm(\pca{A},\subpca{A})$ is defined by the (suitable) pre-composition.
\end{definition}
\begin{remark}
    Notice that the inf-semilattice structure of the fibres of the doctrine $\doctrine{\partAsm(\pca{A},\subpca{A})}{\EWeiElementary}$ is given as follows: the top element of the fibre $\EWeiElementary(X)$  is the function $\top\function{|X|}{\powerset(\pca{A})}$ assigning to every element the empty set. The binary inf $f\wedge g\function{|X|}{\powerset(\pca{A})}$ is given by the function $(f\wedge g)(x):= f(x)\oplus g(x)$. Moreover, since the action of $\EWeiElementary$ on the morphisms of $\partAsm(\pca{A},\subpca{A})$ is defined by the pre-composition, the inf-semilattice structure of the fibres is preserved by the reindexings.
\end{remark}
\begin{proposition}\thlabel{prop:eiR is universal com}
    The doctrine $\doctrine{\partAsm(\pca{A},\subpca{A})}{\EWeiElementary}$ is isomorphic to the doctrine $\doctrine{\partAsm(\pca{A},\subpca{A})}{(-)^{\op}\circ \Psi}$ obtained by composing $\Psi$ with the functor inverting the order of a poset. Therefore, $\EWeiElementary$ is the universal completion of the trivial doctrine over $\partAsm(\pca{A},\subpca{A})$.
\end{proposition}
\begin{proof}
 Given a partitioned assembly $X=(|X|,\name_X)$ we define two morphisms of posets $h_1\function{\EWeiElementary(X)}{\Psi(X)^{\op}}$ and $h_2\function{\Psi(X)^{\op}}{\EWeiElementary(X)}$ as follows: given an element $f\function{|X|}{\powerset(\pca{A})}$, we define $h_1(f)$ as the first projection from the partitioned assembly $X_f$, where $|X_f|:=\{(x,s)| x\in |X|, s\in f(x)\}$ and $\pairing{p,s} \name_{X_f} (x,s)$ where $p\name_X x$.  It is straightforward to check that this assignment is monotone.

 Now let us consider an element $g\function{(|Y|,\name_Y)}{(|X|,\name_X)}$ of $\Psi(X)^{\op}$. We define $h_2(g)$ as the function $h_2(g)\function{|X|}{\powerset(\pca{A})}$ mapping $$x\mapsto\{ a\in \pca{A}| \exists y\in g^{-1}(x), a\name_Y y\}.$$ Again, it is straightforward to check that this assignment is monotone.
 
 Now it is direct to check that $h_2\circ h_1$ is the identity, because $ h_2 h_1 (f) (x)=\{\pairing{p,s} | s \in f(x), p\name_X x\}$, so one can easily check that both $h_2 h_1 (f)\EWeiOrderElementary f $ and $ f\EWeiOrderElementary h_2 h_1 (f) $ hold.

 To prove that $h_1\circ h_2$  is the identity we need to check that, given a morphism $g\function{(|Y|,\name_Y)}{(|X|,\name_X)}$ of $\Psi(X)^{\op}$, there exists two morphism of paritioned assemblies such that 
\[\begin{tikzcd}
	(|Y|,\name_Y) && (|X|,\name_X) \\
	& \qquad \;(|X_{h_2(g)}|,\name_{X_{h_2(g)}})
	\arrow["g", from=1-1, to=1-3]
	\arrow["{a_1}", from=1-1, to=2-2]
	\arrow["{a_2}", shift left=3, from=2-2, to=1-1]
	\arrow["{\pi_1}"', from=2-2, to=1-3]
\end{tikzcd}\]
Notice that $|X_{h_2(g)}|= \{(x,s)\in X\times \pca{A}|  s\in h_2(g)(x)\}$, and by definition of $h_2$,  $|X_{h_2(g)}|= \{(x,s)\in X\times \pca{A}| \exists y\in g^{-1}(x) (s\name_Y y)\}$. Now, $a_1$ is defined by $y\mapsto (g(y), s)$ where $s\name_Y y$, and this it is morphism of partitioned assemblies because is $g$ is so. 

Now, we define $a_2$ by using the Axiom of Choice as follows: $a_2$ maps $ (x,s)$ of $|X_{h_2(g)}|$ to an element of $g^{-1}(x)$ such that $s\name_Y g(x)$. One can easily check that $a_2$ is a morphism of partitioned of assemblies (it is tracked by the second
projection).

With these two morphisms we have that the two triangles of the previous diagram commute, hence $h_1 h_2(g)\leq g$ and $g\leq h_1 h_2(g)$ in $\Psi(X)^{\op}$.

 Finally, combining \thref{rem:universal completion} with \thref{ex_weaksub_is_full_ex_comp} we obtain that $\EWeiElementary$ is the universal completion of the trivial doctrine $\partAsm(\pca{A},\subpca{A})$.
\end{proof}

\textbf{Notation: }For the sake of convenience, from now on we change our notation of partitioned assemblies. Instead of denoting an arbitrary partitioned assembly as a pair $X=(|X|,\name_X)$, we will denote it as a pair $(X,\varphi)$ where $X$ is a set and $\varphi$ is a function from $X$ to $\mathbb{A}$. Indeed, the realizability relation $\name_X$ of a partitioned assembly $X$ corresponds to the function sending each $x\in |X|$ to the unique $a\in \mathbb{A}$ such that $a\name_X x$.

By \thref{rem_instance_reduced_to_parAsm}  we can easily prove that the poset $\EWeiElementary^{\exists}(1)$ given by the existential completion (see Section~\ref{sec_exist_comp}) of $\EWeiElementary$ is equivalent to that of instance reducibilities. 
Motivated by the this fact, we introduce the following definition:
\begin{definition}[doctrine of instance reducibilities]
    We call the doctrine $\doctrine{{\partAsm(\pca{A},\subpca{A})}}{\InstRedDoc:=\EWeiElementary^{\exists}}$ the \textbf{doctrine of instance reducibilities}.
\end{definition}
 
We start by providing an explicit description of this doctrine
$$\doctrine{\partAsm(\pca{A},\subpca{A})}{\InstRedDoc}$$

\begin{enumerate}
\item the elements of $\InstRedDoc(X,\varphi)$ are pairs $((Y,\psi)\xrightarrow{f} (X,\varphi),\alpha)$ where $f$ is an arrow in $\partAsm(\pca{A},\subpca{A})$ and $\alpha\function{Y}{\powerset(\mathbb{A})}$ is a function;
\item $((Y,\psi)\xrightarrow{f} (X,\varphi),\alpha)\leq((Z,\eta)\xrightarrow{g} (X,\varphi),\beta)$ in $\InstRedDoc(X,\varphi)$ if there exists $h:(Y,\psi)\rightarrow (Z,\eta)$ in $\partAsm(\pca{A},\subpca{A})$ such that $f=g\circ h$ and there exists  $\ell\in \mathbb{A}'$ such that 
$$\forall y\in Y\forall q\in \beta(h(y))\,(\ell\cdot\langle \psi(y),q\rangle\in \alpha(y));$$
\item pairs which are equivalent with respect to $\leq$ are identified;
\item if $h\function{(X',\varphi')}{(X,\varphi)}$ is an arrow of $\partAsm(\pca{A},\subpca{A})$, then $$\InstRedDoc_{h}(((Y,\psi)\xrightarrow{f} (X,\varphi),\alpha))$$ is given by a pair
$$((P,\pi)\xrightarrow{h^{*}(f)}(X',\varphi'),\alpha\circ f^{*}(h))$$
where the following diagram is a pullback in $\partAsm(\pca{A},\subpca{A})$:
$$\xymatrix{
(P,\pi)\ar[r]^-{f^{*}(h)}\ar[d]_-{h^{*}(f)} &(Y,\psi)\ar[d]^-{f}\\
(X',\varphi')\ar[r]_-{h}    &(X,\varphi)\\
}$$
\end{enumerate}
It may be helpful to complement the formal definition of the doctrine of instance reducibilities with a more intuitive description: given an element $y \in Y$, one can think of it as carrying two distinct kinds of realizers. On the one hand, $\psi(y)$, coming from the partial assembly structure, can be interpreted as evidence that $y$ is realized (or ``true''). On the other hand, the elements of $\alpha(y)$ can be understood as providing evidence that $y$ is not realized (or ``false'').

From this perspective, the structure captures a form of computational or logical tension between positive and negative information associated to each instance, which is reflected in the definition via existential completion.

Combining \thref{rem_instance_reduced_to_parAsm} with the second point in the previous description we obtain the following result:
\begin{theorem}\thlabel{thm_eW_1_is_Wei}
 Instance reducibilities and  $\InstRedDoc(1)$ are equivalent posets.
\end{theorem}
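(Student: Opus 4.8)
The plan is to make the comparison between the explicit description of the fibre $\InstRedDoc(1)$ given just above and the reformulation of instance reducibility in \thref{rem_instance_reduced_to_parAsm} completely transparent, so that the two preorders literally coincide once the data has been unwound. No genuine obstacle is expected here: the content is essentially bookkeeping, the work being to confirm that the auxiliary commutativity datum of the existential completion degenerates over the terminal object.

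First I would identify the objects. Since $\InstRedDoc=\EWeiElementary^{\exists}$, the description of the full existential completion tells us that an object of the fibre $\InstRedDoc(1)$ is a pair $((Y,\psi)\xrightarrow{f} 1,\alpha)$ with $(Y,\psi)$ a partitioned assembly, $f$ an arrow of $\partAsm(\pca{A},\subpca{A})$, and $\alpha\function{Y}{\powerset(\pca{A})}$. Because $1$ is terminal, the arrow $f$ is uniquely determined and carries no information; hence such an object amounts to the datum of a partitioned assembly $(Y,\psi)$ together with a realizability predicate $\alpha\in\mathsf{Pred}(Y)=\powerset(\pca{A})^{|Y|}$. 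This gives a bijection between representatives of objects of $\InstRedDoc(1)$ and pairs consisting of a partitioned assembly and a realizability predicate on it.

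Next I would unwind the order. By the second point of the explicit description, $((Y,\psi),\alpha)\leq((Z,\eta),\beta)$ in $\InstRedDoc(1)$ holds iff there is a morphism $h\function{(Y,\psi)}{(Z,\eta)}$ of partitioned assemblies making the triangle over $1$ commute, together with some $\ell\in\pca{A}'$ such that $\ell\cdot\pairing{\psi(y),q}\in\alpha(y)$ for all $y\in Y$ and all $q\in\beta(h(y))$. Over the terminal object the triangle commutativity is automatic, so the order is governed solely by the existence of such an $h$ and $\ell$. Comparing this with \thref{rem_instance_reduced_to_parAsm}, where in the partitioned-assembly notation $(Y,\psi)$ the condition $s\name_Y y$ simply reads $s=\psi(y)$, the two statements are identical, with $h$ playing the role of the reducing morphism $H$ and $\ell$ that of the realizer $l$. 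Thus $((Y,\psi),\alpha)\leq((Z,\eta),\beta)$ in $\InstRedDoc(1)$ holds exactly when $\alpha$ is instance reducible to $\beta$ in the sense of \thref{def:instance_reducibility_asm}.

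Finally I would assemble these into the asserted poset equivalence. The preorder of instance reducibilities is the preorder whose carriers are pairs (partitioned assembly, realizability predicate) ordered by instance reducibility, and by the Lemma of Section~\ref{sec:extendedWeihrauch} it is immaterial whether one ranges over all assemblies or only over partitioned ones. The two observations above show that this preorder coincides on the nose with the preorder underlying $\InstRedDoc(1)$; passing to poset reflections, which is already built into the fibre of $\InstRedDoc$ where mutually $\leq$-related pairs are identified, yields the desired equivalence of posets. The only two points demanding care are that the triangle-commutativity in the existential-completion order degenerates over $1$, so that the order reduces to the realizability inequality alone, and that the source realizer $\psi(y)$ appearing in the doctrine order matches the realizer $s$ with $s\name_Y y$ in the instance-reducibility condition; both are immediate in the partitioned-assembly notation.
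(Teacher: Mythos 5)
Your proposal is correct and follows essentially the same route as the paper, which simply combines the reformulation of instance reducibility over partitioned assemblies (\thref{rem_instance_reduced_to_parAsm}, together with the lemma reducing predicates on assemblies to predicates on partitioned ones) with the second clause of the explicit description of $\InstRedDoc$; you merely spell out the bookkeeping the paper leaves implicit. The two points you flag as needing care --- the degeneration of the triangle condition over $1$ and the identification of $\psi(y)$ with the realizer $s\name_Y y$ --- are exactly the content of that comparison.
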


Now we start investigating in detail the structure of the fibres of $\InstRedDoc$:
\begin{proposition}For every partitioned assembly $(X,\varphi)$, the poset $\InstRedDoc(X,\varphi)$ is a bounded lattice.
\end{proposition}
\begin{proof}

\begin{enumerate}
\item The existence of the maximum and of binary infima is guaranteed by \thref{thm: compex P is existential and RC}. However, let us recall their construction for later use.
 
 The maximum is given by $((X,\varphi)\xrightarrow{\id_X} (X,\varphi),x\mapsto \emptyset)$. 

If $(f,\alpha)$ and $(g,\beta)$ are in $\InstRedDoc(X,\varphi)$ with $f$ and $g$ having domain $(Y,\psi)$ and $(Z,\eta)$, respectively, their infimum $(f,\alpha)\wedge (g,\beta)$ is given by 
$$(f\circ \pi_1,\alpha\circ \pi_1\oplus \beta\circ \pi_2)$$
where
\[\begin{tikzcd}
	{(P,\rho)} & {(Z,\eta)} \\
	{(Y,\psi)} & {(X,\varphi)}
	\arrow["{\pi_2}", from=1-1, to=1-2]
	\arrow["{\pi_1}"', from=1-1, to=2-1]
	\arrow["\lrcorner"{anchor=center, pos=0.125}, draw=none, from=1-1, to=2-2]
	\arrow["g", from=1-2, to=2-2]
	\arrow["f"', from=2-1, to=2-2]
\end{tikzcd}\]
is the usual pullback diagram in $\partAsm(\pca{A},\subpca{A})$, where $$P=\{(y,z)\in Y\times Z|\,f(y)=g(z)\}$$ and $\rho(y,z):= \pairing{\psi(y),\eta(z)}$, and $(\alpha\circ \pi_1\oplus \beta\circ \pi_2)(y,z):=\alpha(y)\oplus \beta(z)$.
\item[]
\item The minimum is given by $((\emptyset,!)\xrightarrow{!_X} (X,\varphi),!)$. Indeed, for every $(f,\alpha)$ in $\InstRedDoc(X,\varphi)$, one has $!_X=f\circ !_Y$ and the other condition is vacuously satisfied by any $\ell\in \mathbb{A}'$ since we universally quantify over $\emptyset$.  
\item[]
\item A supremum for $(f,\alpha)$ and $(g,\beta)$ having domain $(Y,\psi)$ and $(Z,\eta)$, respectively, is given by 
$$(f,\alpha)\vee (g,\beta):=([f,g],[\alpha,\beta])$$ where $C:=Y+Z$  and $[f,g]\function{(C,\chi)}{(X,\varphi)}$ is defined as usual for coproducts of sets, and $\chi\function{C}{\pca{A}}$ is defined by
\[
\begin{cases}
  \chi(0,y) =\pairing{\kPCA, \psi(y)}  \\
  \chi(1,z) = \pairing{\bkPCA, \eta(z)} 
\end{cases}
\]
Similarly, $[\alpha,\beta]\function{C}{\powerset (\pca{A})}$ is defined by 
\[
\begin{cases}
[\alpha,\beta](0,y)= \alpha(y) \\
[\alpha,\beta](1,z)= \beta(z)
\end{cases}
\]
If $(f,\alpha)\leq (h,\gamma)$ and $(g,\beta)\leq (h,\gamma)$ in $\InstRedDoc(X,\varphi)$ and the domain of $h$ is $(E,\epsilon)$, then there exist two arrows of partitioned assemblies $k_1:(Y,\psi)\rightarrow (E,\epsilon)$ and $k_2:(Z,\eta)\rightarrow (E,\epsilon)$, and $\ell_1,\ell_2\in \mathbb{A}'$ such that:
\begin{enumerate}
\item $f=h\circ k_1$;
\item $g=h\circ k_2$;
\item $\forall y\in Y\,\forall q\in \gamma(k_1(y))\,(\ell_1\cdot \langle\psi(y),q\rangle\in \alpha(y))$
\item $\forall z\in Z\,\forall r\in \gamma(k_2(z))\,(\ell_2\cdot \langle\eta(z),r\rangle\in \beta(z))$
\end{enumerate}

If we consider the arrow given by $[k_1,k_2]:(C,\chi)\rightarrow (E,\epsilon)$, then $[f,g]=h\circ [k_1,k_2]$. In order to show that $(f,\alpha)\vee (g,\beta)\leq (h,\gamma)$ we have to define an element $\ell\in \pca{A}'$ such that
$$\forall y\in Y\,\forall q\in \gamma(k_1(y))\,\ell\cdot \langle \langle \kPCA,\psi(y)\rangle, q\rangle\in  \alpha(y)$$
$$\forall z\in Z\,\forall r\in \gamma(k_2(z))\,\ell\cdot \langle \langle \bkPCA,\eta(z)\rangle, r\rangle\in  \beta(z)$$
One can just take $\ell$ to be

$$\lambda \xi.(\fprPCA(\fprPCA\xi))(\ell_1\langle \sprPCA(\fprPCA\xi),\sprPCA\xi \rangle )(\ell_2\langle \sprPCA(\fprPCA\xi),\sprPCA\xi \rangle )$$


To conclude $(f,\alpha)\leq (f,\alpha)\vee (g,\beta)$ via $i_1$ and $\ell_1:=\sprPCA$ while $(g,\beta)\leq (f,\alpha)\vee (g,\beta)$ via $i_2$ and $\ell_2:=\sprPCA$.

\item[]
\end{enumerate}
\end{proof}
\begin{proposition}\label{heytobj}For every partitioned assembly $(X,\varphi)$, the poset $\InstRedDoc(X,\varphi)$ is a Heyting algebra.
\end{proposition}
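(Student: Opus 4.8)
The plan is to display each fibre $\InstRedDoc(X,\varphi)$ as a bounded lattice and then equip it with a relative pseudo-complement, verifying the three blocks in order: finite meets and top, finite joins and bottom, and finally Heyting implication, which is where the real work lies. Since $\InstRedDoc=\EWeiElementary^{\exists}$ is a full existential completion, it is in particular an existential doctrine, so every fibre is already an inf-semilattice and I only need to record the explicit formulas. The top of $\InstRedDoc(X,\varphi)$ is the class of $(\arrowup{(X,\varphi)}{\id}{(X,\varphi)},\top)$ with $\top(x)=\emptyset$ the top of $\EWeiElementary(X)$: it lies above every $(\arrowup{(Y,\psi)}{f}{(X,\varphi)},\alpha)$ because the comparison is witnessed by $f$ itself and the side condition quantifies vacuously over $\emptyset$. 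The binary meet of $(\arrowup{(Y,\psi)}{f}{(X,\varphi)},\alpha)$ and $(\arrowup{(Z,\eta)}{g}{(X,\varphi)},\beta)$ is obtained on the pullback $(Y,\psi)\times_{(X,\varphi)}(Z,\eta)$ by reindexing both predicates and taking their fibrewise meet, which in $\EWeiElementary$ is the $\oplus$-sum; this is exactly the formula forced by Beck--Chevalley and Frobenius reciprocity.

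Second, I would produce finite joins and the bottom using the (well-behaved) finite coproducts of $\partAsm(\pca{A},\subpca{A})$. The bottom is $(\arrowup{\mathbf{0}}{!}{(X,\varphi)},\varnothing)$ given by the initial (empty) partitioned assembly $\mathbf{0}$, which sits below everything since the comparison map out of $\mathbf{0}$ is unique and its side condition is vacuous. For binary joins I claim
\[(\arrowup{(Y,\psi)}{f}{(X,\varphi)},\alpha)\vee(\arrowup{(Z,\eta)}{g}{(X,\varphi)},\beta)=(\arrowup{(Y,\psi)+(Z,\eta)}{[f,g]}{(X,\varphi)},[\alpha,\beta]),\]
where the coproduct realizers tag the two summands by $\kPCA$ and $\bkPCA$ and $[\alpha,\beta]$ restricts to $\alpha$ and $\beta$ on the two components; this predicate is well defined because $\EWeiElementary$ turns coproducts into products, $\powerset(\pca{A})^{|Y|+|Z|}\cong\powerset(\pca{A})^{|Y|}\times\powerset(\pca{A})^{|Z|}$. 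That it is the least upper bound I would check by hand: the two coproduct injections witness that it is an upper bound, and given any common upper bound I would copair the two witnessing maps and assemble a single realizer which first reads the $\kPCA/\bkPCA$ tag of the incoming name and then dispatches to the appropriate witnessing realizer, using combinatory completeness.

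The main obstacle is the Heyting implication. Here I would use that $\partAsm(\pca{A},\subpca{A})$ has weak dependent products (\thref{ex:weak_sub_partition_asm}), exactly as in the construction of the weak-subobject tripos, but now dragging the realizability predicates along. Given $a=(\arrowup{(Y,\psi)}{f}{(X,\varphi)},\alpha)$ and $b=(\arrowup{(Z,\eta)}{g}{(X,\varphi)},\beta)$, I would take the base of $a\Rightarrow b$ to be a weak dependent product $\prod_f f^{*}(Z,\eta)$ over $(X,\varphi)$, whose fibre over $x$ weakly parametrizes the maps from the $f$-fibre to the $g$-fibre over $x$, and define its predicate so that a realizer at a point codes a uniform procedure which, for each element of the $f$-fibre, turns an $\alpha$-realizer together with the point's name $\varphi(x)$ into a $\beta$-realizer at the chosen image.

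The verification then reduces to the Heyting adjunction
\[ c\wedge a\le b \iff c\le (a\Rightarrow b)\]
for every $c$ in the fibre, which I would prove by transposing the witnessing map across the weak universal property of the dependent product while simultaneously currying and uncurrying the two realizers $\ell$ in the PCA. The delicate points, and the reason this is the hard step, are that the dependent product is only \emph{weak}, so the transposition is not an isomorphism on the nose and must be shown to descend to the poset reflection, and that the realizer bookkeeping has to thread the name $\varphi(x)$ and the $\oplus$-decomposition of the meet $c\wedge a$ correctly through the (un)currying; both are handled by combinatory completeness, but keeping the two sides genuinely inter-derivable is the crux. Once meets, joins, bounds and this implication are in place, each $\InstRedDoc(X,\varphi)$ is a Heyting algebra, as claimed.
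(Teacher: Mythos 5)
Your treatment of the bounded-lattice structure follows the paper's proof essentially verbatim: top and binary meets come from the existential-completion structure (meet on the pullback with the fibrewise $\oplus$), the bottom is the empty partitioned assembly, and joins are tagged coproducts with a $\kPCA/\bkPCA$ case-split realizer. The gap is in the Heyting implication, which you correctly identify as the crux but do not actually construct. There are two problems. First, the direction of your predicate is reversed: in $\EWeiElementary$ the order $\alpha\EWeiOrderElementary\beta$ asks for a converter sending elements of $\beta$ (together with the name) \emph{into} $\alpha(y)$, so the implication data must consume $\beta$-elements at the chosen image and produce something on the $\alpha$ side, not turn ``an $\alpha$-realizer into a $\beta$-realizer''. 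Second, and more seriously, taking the base to be just a weak dependent product $\prod_f f^{*}(Z,\eta)$ and reading the predicate off as ``the set of codes of a uniform conversion procedure'' does not give a right adjoint to $-\wedge(f,\alpha)$. Try to derive $c\wedge a\le b$ from $c\le (a\Rightarrow b)$ with that definition: the hypothesis only tells you what happens when you feed it an element of the implication predicate at $m(e)$, and if that predicate is empty at some point the hypothesis is vacuous there, while the conclusion still demands, for each $q\in\beta(\cdot)$, an element of $\gamma(e)\oplus\alpha(y)$. The transposition you describe has nothing to transpose.

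The paper's fix is to build the candidate value of the implication predicate into the base object itself: $(f,\alpha)\Rightarrow(g,\beta)$ lives over the partitioned assembly of tuples $(x,k,R,r,l)$ where $k$ is a realized map $f^{-1}(x)\to g^{-1}(x)$, $R\subseteq\pca{A}$ is an \emph{arbitrary} candidate set, and $l$ witnesses $\forall y\in f^{-1}(x)\,\forall q\in\beta(k(y))\,(l\cdot\langle\psi(y),q\rangle\in R\oplus\alpha(y))$; the predicate is then simply the projection $(x,k,R,r,l)\mapsto R$. The $\oplus$ lets the converter decide at runtime whether to answer on the $R$ side or the $\alpha$ side, and the quantification over all admissible $R$ --- absorbed into the base, which is exactly what the full existential completion permits --- is what makes both directions of the adjunction $c\wedge a\le b\iff c\le(a\Rightarrow b)$ go through. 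Your sketch gestures at the $\oplus$-decomposition in the verification, but the object you construct does not carry the components $(R,l)$ needed to run that argument, so as written the hard step fails.
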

\begin{proof}

 A Heyting implication $(f,\alpha)\Rightarrow(g,\beta)$ from $(f,\alpha)$ to $(g,\beta)$ is given by the first projection $\pi_1\function{((Y,\psi)\rhd_{(X,\varphi)} (Z,\eta), \psi \rhd_{(X,\varphi)} \eta)}{(X,\varphi) }$ from the partitioned assembly defined as follows:

\begin{itemize}
    \item the underlying set $(Y,\psi)\rhd_{(X,\varphi)} (Z,\eta) $ is given by
$$(\{(x,k,R,r,l)|\;x\in X,r\in \mathbb{A},\qquad\qquad\qquad\qquad\qquad\qquad\qquad\qquad $$
$$ r\Vdash k:(f^{-1}(x),\psi)\rightarrow(g^{-1}(x),\eta)\text{ in }\partAsm(\pca{A},\pca{A}),R\subseteq \mathbb{A},$$
$$l\in \mathbb{A},\forall y\in f^{-1}(x)\,\forall q\in \beta(k(y))\,(l\cdot\langle \psi(y),q\rangle \in R\oplus\alpha(y))\}$$
\item the function $\psi \rhd_{(X,\varphi)} \eta$ is given by the assignment 
$$(x,k,R,r,l)\mapsto \langle \varphi(x),\langle r,l\rangle\rangle)$$
\end{itemize}

together with the predicate defined by the third projection $$\pi_3\function{(Y,\psi)\rhd_{(X,\varphi)} (Z,\eta)}{\powerset (\pca{A})}$$ 
$$(x,k,R,r,l)\mapsto R$$

Assume now $(h,\gamma)\wedge (f,\alpha)\leq (g,\beta)$, where $h:(E,\epsilon)\rightarrow (X,\varphi)$. 
Then, there exists a morphism of partitioned assemblies $m\function{(P,\rho)}{(Z,\eta)}$ from the pullback of $h$ and $f$,
$$P=\{(e,y)\in E\times Y|\,h(e)=f(y)\}$$
with $\rho (e,y)= \pairing{\epsilon(e), \psi(y)}$, such that the triangle
\[\begin{tikzcd}
	& {(P,\rho)} \\
	{(Z,\eta)} & {(X,\varphi)}
	\arrow["m"', from=1-2, to=2-1]
	\arrow["{f\circ (f^*h)}", from=1-2, to=2-2]
	\arrow["g"', from=2-1, to=2-2]
\end{tikzcd}\]
commutes
i.e.\ $(g\circ m)(e,y)=f(y)=h(e)$, and an element $\ell\in \mathbb{A}'$ such that 
$$\forall (e,y)\in P\,\forall q\in \beta(m(e,y))\,(\ell\cdot\pairing{\rho (e,y),q}\in \gamma(e)\oplus \alpha(y))$$

We must show that $(h,\gamma)\leq (f,\alpha)\Rightarrow (g,\beta)$. First, we have to define a morphism of partitioned assemblies 
$$ \bar{m}\function{(E,\epsilon)}{((Y,\psi)\rhd_{(X,\varphi)} (Z,\eta), \psi \rhd_{(X,\varphi)} \eta) }$$

We then consider the function $\bar{m}$ which sends every $e\in E$ into $$(h(e),y\mapsto m(e,y),\gamma(e),\lambda \xi.r\cdot \pairing{\epsilon(e),\xi},\lambda \xi.\ell\cdot \langle \pairing{ \epsilon(e),\fprPCA(\xi)}, \sprPCA(\xi)\rangle)$$
where $r$ is a chosen realizer for the morphism $m$ of partitioned assemblies. Notice that the function $y\mapsto m(e,y)$ is a well-defined morphism in $\partAsm(\pca{A},\pca{A})$ from $(f^{-1}(h(e)),\psi)$ to $(g^{-1}(h(e)),\eta)$, 
because, for every $y\in f^{-1}(h(e)) $, by definition of $P$, we have that $(e,y)\in P$, and hence $m(e,y)\in g^{-1}(h(e))$ (because $(g\circ m)(e,y)=h(e)$), and moreover it is direct to check
that $\lambda \xi.r\cdot \langle \epsilon(e),\xi\rangle$ is a realizer for such a function to be a morphism in $\partAsm(\pca{A},\pca{A})$ using the fact that $r$ is a realizer of $m\function{(P,\rho)}{(Z,\eta)}$, i.e.\ that $r\cdot \pairing{\epsilon (e),\psi(y)}=\eta(m(e,y))$ for every $(e,y)\in P$.
Moreover, it is immediate to check that the fifth component of $\overline{m}(e)$ satisfies the required property, as a direct consequence of the property of $\ell$ shown above.

It is also straightforward to check that the function $\bar{m}$ provides a morphism $ \bar{m}\function{(E,\epsilon)}{((Y,\psi)\rhd_{(X,\varphi)} (Z,\eta), \psi \rhd_{(X,\varphi)} \eta) }$ of partitioned assemblies and that, by definition, it makes the desired triangle commute, i.e.\ $\pi_1\circ \bar{m}=h$. Then, we need to find an element $\ell'\in \mathbb{A}'$ such that
$$\forall e\in E\,\forall q\in  (\pi_3\circ \bar{m})(e )\,(\ell'\cdot \pairing{\epsilon(e),q} \in \gamma(e)).$$
But this is quite easy as, by definition of $\bar{m}$, we have that $(\pi_3\circ \bar{m})(e)=\gamma (e)$, so we can put $\ell'=\sprPCA$.\\

Conversely, let us suppose that  

$$(h,\gamma)\leq (f,\alpha)\Rightarrow (g,\beta)$$
via a morphism of partitioned assemblies 
$$n\function{(E,\epsilon)}{((Y,\psi)\rhd_{(X,\varphi)} (Z,\eta), \psi \rhd_{(X,\varphi)} \eta)}$$
$$e\mapsto (h(e),k(e),R(e),r(e),l(e))$$
(realized by $\overline{r}\in \pca{A}'$) 

and an element $\ell\in \pca{A}'$ such that
\begin{equation}\label{eq_sec_verso_implica}
    \forall e\in E\,\forall q\in  R(e )\,(\ell\cdot \pairing{\epsilon(e),q} \in \gamma(e)).
\end{equation}

Now we need to show that $(h,\gamma)\wedge (f,\alpha)\leq (g,\beta)$. First, we can define an arrow $\widetilde{n}\function{(P,\rho)}{(Z,\eta)}$ from the pullback $(P,\rho)$ of $h$ and $f$ to $(Z,\eta)$ as follows:
$$(e,y)\mapsto k(e)(y)$$
This is a morphism of partitioned assemblies, since if $e\in E$, $y\in Y$ and $h(e)=f(y)$, then $(\lambda \xi.(\fprPCA(\sprPCA(\overline{r}(\fprPCA\xi))))(\sprPCA\xi))\langle\epsilon(e),\psi(y)\rangle=\eta(k(e)(y))$. 

Moreover, we have that the triangle
\[\begin{tikzcd}
	& {(P,\rho)} \\
	{(Z,\eta)} & {(X,\varphi)}
	\arrow["\widetilde{n}"', from=1-2, to=2-1]
	\arrow["{h\circ(h^*f)}", from=1-2, to=2-2]
	\arrow["g"', from=2-1, to=2-2]
\end{tikzcd}\]
commutes because for every $(e,y)\in P$ we have that, by assumption, $\widetilde{n}(e,y)\in g^{-1}(h(e))$ (because $h=\pi_1 \circ n$), i.e.\ $(g\circ \widetilde{n})(e,y)= h(e)=(h\circ(h^*f))(e,y)$.

Now, we need to find an  element $\ell'\in \mathbb{A}'$ such that 
\begin{equation}\label{eq_finale_implica}
    (\forall (e,y)\in P)(\forall q\in \beta(\widetilde{n}(e,y)))(\ell'\cdot\pairing{\pairing{\epsilon(e),\psi(y)},q}\in \gamma(e)\oplus \alpha(y))
\end{equation}

Notice that by definition of $(Y,\psi)\rhd_{(X,\varphi)} (Z,\eta)$  we have that $$(\forall y\in f^{-1}(h(e)))\,(\forall q\in \beta(k(e)(y)))\,(l(e) \cdot\langle \psi(y),q\rangle \in  R(e)\oplus\alpha(y))$$
and since $\widetilde{n}(e,y)=k(e)(y)$, this is equivalent to   
\[(\forall (e,y)\in P)(\forall q\in \beta(\widetilde{n}(y,e)))\,(l(e) \cdot\langle \psi(y),q\rangle \in  R(e)\oplus\alpha(y))\]
On then other hand, by \eqref{eq_sec_verso_implica}, we know that there exists $\ell\in \pca{A}'$ such that
\[ (\forall e\in E)\,(\forall q\in  R(e ))\,(\ell\cdot \pairing{\epsilon(e),q} \in \gamma(e))\]
Hence, combining $\ell\in \pca{A}$ with the fact that $n\function{(E,\epsilon)}{((Y,\psi)\rhd_{(X,\varphi)} (Z,\eta), \psi \rhd_{(X,\varphi)} \eta)}$ is a morphism of partitioned assemblies with a realizer $\overline{r}\in \pca{A}'$ and thus each $l(e)$ can be obtained in a computable way using $\overline{r}$, we can define an element $\ell'$ such that \eqref{eq_finale_implica} is satisfied. 




\end{proof}

\begin{proposition}\label{heytmor} For every arrow $f:(Y,\psi)\rightarrow (X,\varphi)$ in $\partAsm(\pca{A},\subpca{A})$, $\InstRedDoc(f)\function{\InstRedDoc(X,\varphi)}{\InstRedDoc(Y,\psi)} $ is a morphism of Heyting algebras. 
\end{proposition}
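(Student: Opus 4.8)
The plan is to check preservation of each component of the Heyting structure exhibited in Proposition~\ref{heytobj}: the top element, binary meets, the bottom element, binary joins, and the Heyting implication. Since $\doctrine{\partAsm(\pca{A},\subpca{A})}{\InstRedDoc}$ is an existential (in particular primary) doctrine by \thref{thm: compex P is existential and RC}, its values lie in $\mathsf{InfSl}$ and every reindexing $\InstRedDoc(f)$ is \emph{by definition} a morphism of inf-semilattices. Hence preservation of the top element $\top$ and of binary meets $\wedge$ is automatic, and the work reduces to the bottom element, the join, and the implication.

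For the bottom element, recall that the minimum of $\InstRedDoc(X,\varphi)$ is the class of $((\emptyset,!)\xrightarrow{!_X}(X,\varphi),!)$. As $\InstRedDoc(f)$ is computed by pullback and the pullback of the initial object along any arrow of $\partAsm(\pca{A},\subpca{A})$ is again initial, the reindexing sends this class to the minimum of $\InstRedDoc(Y,\psi)$; this is immediate. For the join, the key point is that in $\partAsm(\pca{A},\subpca{A})$ pullback functors preserve finite coproducts, since both coproducts (with the tagged realizer built from $\kPCA$ and $\bkPCA$) and pullbacks are computed on underlying sets, where $\mathsf{Set}$ is extensive. Thus, writing a generic join as $([u,v],[\alpha,\beta])$ over the coproduct of the two underlying sets, pulling it back along $f$ yields a partitioned assembly whose carrier is the pullback of the coproduct, which splits canonically as the coproduct of the two separate pullbacks; one then verifies that the transported predicate agrees fibrewise with the copairing of the separately reindexed predicates, so that $\InstRedDoc(f)$ preserves $\vee$ up to the equivalence in the fibre.

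The implication is the genuinely delicate step. One inequality is free: from $((u,\alpha)\Rightarrow(v,\beta))\wedge(u,\alpha)\leq(v,\beta)$ in $\InstRedDoc(X,\varphi)$, applying the monotone, meet-preserving map $\InstRedDoc(f)$ and then the defining adjunction of $\Rightarrow$ in $\InstRedDoc(Y,\psi)$ gives
\[ \InstRedDoc(f)\big((u,\alpha)\Rightarrow(v,\beta)\big)\ \leq\ \InstRedDoc(f)(u,\alpha)\Rightarrow\InstRedDoc(f)(v,\beta). \]
The reverse inequality is the main obstacle and has to be produced explicitly from the description of the exponential $(U,\mu)\rhd_{(X,\varphi)}(V,\nu)$ in Proposition~\ref{heytobj}. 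Here I would show that pulling this object back along $f$ is, up to fibre equivalence, the corresponding $\rhd$-object built from the reindexed data: an element of the reindexed exponential over a point $x'$ of $Y$ records a morphism on the fibre $u^{-1}(f(x'))$ together with a set $R$ and realizers, and since the fibres of $u$ and of its pullback coincide and all the realizer-side conditions are stable under the operations of $\subpca{A}$, these data transport between the two exponentials. The crux is to assemble, from the realizer of $f$ together with the realizers witnessing membership in the reindexed exponential, a single element of $\subpca{A}$ certifying the reverse reduction; this is a $\lambda$-term manipulation in $\subpca{A}$ of the same nature as, but somewhat heavier than, the ones carried out in the proof of Proposition~\ref{heytobj}. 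Once both inequalities hold, $\InstRedDoc(f)$ preserves $\Rightarrow$, and together with the previous points this shows it is a morphism of Heyting algebras.
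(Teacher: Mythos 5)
Your proposal follows essentially the same route as the paper: top, meets, bottom, and joins are handled by the same observations (the fibres being inf-semilattices and reindexing a morphism thereof, initiality of the empty assembly under pullback, stability of binary coproducts in $\partAsm(\pca{A},\subpca{A})$), and the Heyting implication is singled out as the only delicate point, with one inequality obtained for free from the adjunction. Your write-up is in fact somewhat more detailed than the paper's, which merely remarks that the implication ``requires a more careful analysis'' and stops; but like the paper you leave the crux --- the explicit realizer in $\subpca{A}$ witnessing $\InstRedDoc(f)(u,\alpha)\Rightarrow\InstRedDoc(f)(v,\beta)\leq \InstRedDoc(f)\bigl((u,\alpha)\Rightarrow(v,\beta)\bigr)$ via a comparison of the pulled-back $\rhd$-object with the $\rhd$-object of the reindexed data --- as a sketch, so neither argument is complete on that point.
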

\begin{proof}
It is immediate to check that $\InstRedDoc(f)$ preserve maxima and minima. The preservation of binary infima and binary suprema follow essentially from the basic properties of pullbacks and from the fact that the category of partitioned assemblies has stable binary coproducts.

The only construct which requires a more careful analysis is that of Heyting implication.
\end{proof}
\begin{proposition}\label{right}
    For every arrow $f:(Y,\psi)\rightarrow (X,\varphi)$ in $\partAsm(\pca{A},\subpca{A})$, $\InstRedDoc(f)\function{\InstRedDoc(X,\varphi)}{\InstRedDoc(Y,\psi)} $  has a right adjoint $\forall_f\function{\InstRedDoc(Y,\psi)}{\InstRedDoc(X,\varphi)}$, satisfying Beck-Chevalley condition, defined by the following assignment:
\[\forall_{f}(g,\alpha):= ((E,\rho)\xrightarrow{\pi_1}(X,\varphi),\bar{\alpha})\] 
for every $\freccia{(Y',\psi')}{g}{(Y,\psi)}$, $\alpha:Y'\rightarrow \powerset(\pca{A})$ where $$E:=\{(x,k,e)|\,x\in X,e\in \pca{A},\qquad\qquad\qquad\qquad\qquad\qquad\qquad\qquad\qquad\qquad$$
$$ \qquad e\Vdash k:(f^{-1}(x),\psi_{|{f^{-1}(x)}})\rightarrow (Y',\psi')  \text{ in }\partAsm(\pca{A},\pca{A}), g\circ k=\mathsf{id}_{f^{-1}(x)}\},$$
$\rho(x,k,e):=\pairing{\varphi(x),e}$ and 
\[\bar{\alpha} (x,k,e):=\bigcup_{y\in f^{-1}(x)}\{\psi(y)\}\otimes\alpha(k(y)).\]

\end{proposition}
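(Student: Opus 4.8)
The plan is to establish the adjunction $\InstRedDoc(f)\dashv\forall_f$ directly, by unfolding both sides of the order relation from the explicit description of $\InstRedDoc$, and then to obtain Beck--Chevalley formally. Fix $(h,\gamma)\in\InstRedDoc(X,\varphi)$ with $h\colon(W,\chi)\to(X,\varphi)$ and $(g,\alpha)\in\InstRedDoc(Y,\psi)$ with $g\colon(Y',\psi')\to(Y,\psi)$. Computing the reindexing, $\InstRedDoc(f)(h,\gamma)$ is the element $(f^{*}(h),\gamma\circ h^{*}(f))$ obtained from the pullback of $h$ along $f$, where $P=\{(y,w)\mid f(y)=h(w)\}$, $\pi(y,w)=\pairing{\psi(y),\chi(w)}$, the structural map $f^{*}(h)\colon(P,\pi)\to(Y,\psi)$ sends $(y,w)\mapsto y$, and the predicate is $(y,w)\mapsto\gamma(w)$. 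Thus $\InstRedDoc(f)(h,\gamma)\le(g,\alpha)$ unfolds to: there is a morphism $\theta\colon(P,\pi)\to(Y',\psi')$ with $g\circ\theta=f^{*}(h)$ and an $\ell\in\subpca{A}$ with $\ell\cdot\pairing{\pairing{\psi(y),\chi(w)},a}\in\gamma(w)$ for all $(y,w)\in P$ and $a\in\alpha(\theta(y,w))$; whereas $(h,\gamma)\le\forall_f(g,\alpha)$ unfolds to: there is a morphism $\mu\colon(W,\chi)\to(E,\rho)$ with $\pi_1\circ\mu=h$, necessarily of the form $\mu(w)=(h(w),k_w,e_w)$, and an $\ell'\in\subpca{A}$ with $\ell'\cdot\pairing{\chi(w),\pairing{\psi(y),a}}\in\gamma(w)$ for all $w\in W$, $y\in f^{-1}(h(w))$ and $a\in\alpha(k_w(y))$, using that $\bar{\alpha}(\mu(w))=\{\pairing{\psi(y),a}\mid y\in f^{-1}(h(w)),\,a\in\alpha(k_w(y))\}$.

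Before comparing the two, I would record well-definedness: $(E,\rho)$ is a partitioned assembly and $\pi_1$ is a morphism of partitioned assemblies realized by $\fprPCA$, since the first component of $\rho(x,k,e)$ is $\varphi(x)$; hence $\forall_f(g,\alpha)$ is a genuine element of $\InstRedDoc(X,\varphi)$. For the direction $(\Rightarrow)$, given $\theta$ realized by some $t\in\subpca{A}$ together with $\ell$, I would set $k_w(y):=\theta(y,w)$ for $y\in f^{-1}(h(w))$. The identity $g\circ\theta=f^{*}(h)$ forces $g(k_w(y))=y$, so $k_w$ is a fibrewise section of $g$; and $e_w:=\Lambda\cdot\chi(w)$, with $\Lambda:=\lambda c.\lambda\xi.\,t\cdot\pairing{\xi,c}\in\subpca{A}$, realizes $k_w$ in $\partAsm(\pca{A},\pca{A})$, so $(h(w),k_w,e_w)\in E$. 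The assignment $\mu(w):=(h(w),k_w,e_w)$ is then a morphism $(W,\chi)\to(E,\rho)$ realized by $\lambda c.\pairing{r_h\cdot c,\Lambda\cdot c}\in\subpca{A}$ (where $r_h\in\subpca{A}$ realizes $h$) and satisfies $\pi_1\circ\mu=h$; finally a fixed pairing-rearrangement combinator converts $\ell$ into the required $\ell'\in\subpca{A}$.

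The direction $(\Leftarrow)$ is the mirror image. From $\mu$, realized by some $m\in\subpca{A}$, together with $\ell'$, I would define $\theta(y,w):=k_w(y)$; this is well defined since $(y,w)\in P$ gives $y\in f^{-1}(h(w))$, and the section property recorded in the definition of $E$ yields $g\circ\theta=f^{*}(h)$. A realizer $t\in\subpca{A}$ for $\theta$ is recovered by extracting $e_w=\sprPCA(m\cdot\chi(w))$ from the name $\rho(\mu(w))=\pairing{\varphi(h(w)),e_w}$ and applying it, i.e.\ $t:=\lambda\xi.\,(\sprPCA(m\cdot(\sprPCA\xi)))\cdot(\fprPCA\xi)$; the inverse rearrangement combinator converts $\ell'$ back into $\ell$. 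This gives the equivalence, hence the adjunction $\InstRedDoc(f)\dashv\forall_f$ along every $f$. The Beck--Chevalley condition for $\forall_f$ then follows formally, by the mate calculus recalled in Remark~\ref{rem_BCC_forall}, from the fact that the left adjoints of the existential doctrine $\InstRedDoc$ already satisfy it (\thref{thm: compex P is existential and RC}).

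The main obstacle is not the set-theoretic bookkeeping but keeping every constructed realizer inside the sub-PCA $\subpca{A}$. The definition of $E$ requires the fibrewise sections $k$ to be realized only in the full PCA $\pca{A}$, which is exactly what allows the universal quantifier to range over arbitrary, possibly non-effective, fibrewise choices; what must be \emph{effective}, i.e.\ lie in $\subpca{A}$, is the \emph{uniform} assignment $w\mapsto\mu(w)$ and the reducing elements $\ell,\ell'$. The crux of the $(\Leftarrow)$ direction is therefore that the witness $e_w$ realizing $k_w$ is packaged into the name $\rho(\mu(w))$, so it can be extracted by $\sprPCA$ and reused as a realizer for $\theta$ uniformly in $w$; checking that this extraction, together with the pairing rearrangements, never leaves $\subpca{A}$ is where all the care is needed.
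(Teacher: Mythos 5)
Your proposal is correct and follows essentially the same route as the paper's proof: unfold both order relations explicitly, transpose by currying the pullback morphism into a family of fibrewise sections whose realizer is packaged into the second component of the name $\rho(x,k,e)$ (and, conversely, extracted from it by $\sprPCA$), and repair the mismatch of pairings with fixed combinators in $\subpca{A}$, with Beck--Chevalley for $\forall_f$ deduced formally from the one for the left adjoints as in Remark~\ref{rem_BCC_forall}. The only cosmetic difference is that you also record well-definedness of $(E,\rho)$ and implicitly correct the typo $\varphi'(x)$ to $\varphi(x)$ in the definition of $\rho$.
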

\begin{proof}
    Suppose that $(h,\gamma) \leq \forall_{f}(g,\alpha)$, in $\InstRedDoc (X,\varphi) $, i.e.\ there exists a morphism $m$ of partitioned assemblies such that the diagram
\[\begin{tikzcd}
	& {(X',\varphi')} \\
	{(E,\rho)} & {(X,\varphi)}
	\arrow["m"', dashed, from=1-2, to=2-1]
	\arrow["h", from=1-2, to=2-2]
	\arrow["{\pi_1}"', from=2-1, to=2-2]
\end{tikzcd}\]
commutes and there exists  $\ell\in \mathbb{A}'$ such that 
$$\forall x'\in X'\forall q\in \bar{\alpha}(m(x'))\,(\ell\cdot\langle \varphi'(x'),q\rangle\in \gamma(x')).$$
We have to show that $\InstRedDoc(f) (h,\gamma) \leq (g,\alpha)$. The first step is defining a morphism of partitioned assemblies $\bar{m}$ such that the following diagram
\[\begin{tikzcd}
	& {(P,\pi)} \\
	{(Y',\psi)} & {(Y,\psi)}
	\arrow["{\bar{m}}"', dashed, from=1-2, to=2-1]
	\arrow["{f^*(h)}", from=1-2, to=2-2]
	\arrow["g"', from=2-1, to=2-2]
\end{tikzcd}\]
commutes, where $(P,\pi)$ is the pullback of $f$ and $h$, i.e.\ $P:=\{((x',y)\in X'\times Y| f(y)=h(x')\}$. We define $\bar{m}(x',y):= \pr_2 (m (x'))(y)$ (recall that $\pi_1(m(x'))=h(x')$ by assumption). Moreover, notice the  $g\circ \bar{m}(x',y)=f^*(h)(x',y)=y$ because, by definition of the second component of $E$, $g\circ \pr_2 (x,k,e)=\id_{f^{-1}(x)}$.

The morphism $\bar{m}$ is a morphism of partitioned assemblies, since it is realized by $\lambda x.(\sprPCA(r(\sprPCA x)))(\fprPCA x)$ where $r\Vdash m$. In order to show that there exists $\bar{\ell}\in \mathbb{A}'$ such that
$$\forall p \in P \forall q\in \alpha\circ \bar{m}(p)\,(\bar{\ell}\cdot\langle \pi(p),q\rangle\in \gamma\circ  h^*(f)(p)).$$
it is enough to take
$$\bar{\ell}:=\lambda x.\ell\langle \fprPCA(\fprPCA x),\langle \sprPCA(\fprPCA x),\sprPCA x\rangle \rangle.$$
%

Conversely, let us assume that $\InstRedDoc(f) (h,\gamma) \leq (g,\alpha)$ via a morphism of partitioned assemblies $n\function{(P,\pi)}{(Y',\psi')}$ such that the diagram
\[\begin{tikzcd}
	& {(P,\pi)} \\
	{(Y',\psi)} & {(Y,\psi)}
	\arrow["{n}"', dashed, from=1-2, to=2-1]
	\arrow["{f^*(h)}", from=1-2, to=2-2]
	\arrow["g"', from=2-1, to=2-2]
\end{tikzcd}\]
commutes and an element $\ell'\in \subpca{A}$ such that

\begin{equation}\label{eq_penultima_forall}
    \forall p \in P \forall q\in \alpha\circ n(p)\,(\ell'\cdot\langle \pi(p),q\rangle\in \gamma\circ  h^*(f)(p)).
\end{equation}

We have to show that $(h,\gamma)\leq \forall_f(g,\alpha)$. First, let us consider the morphism of partitioned assemblies $$\widehat{n}:(X',\varphi')\rightarrow (E,\rho)$$
$$x'\mapsto (h(x'),n(x',-),\lambda w.r\langle \varphi'(x'),w\rangle)$$
where $r\Vdash n$.
The function $\widehat{n}$ is a morphism of partitioned assemblies, since a realizer for it is $\lambda u.\langle r'u,\lambda w.r\langle u,w\rangle\rangle$, where $r'$ is a realizer for $h$. Now it is immediate to check that diagram

\[\begin{tikzcd}
	& {(X',\varphi')} \\
	{(E,\rho)} & {(X,\varphi)}
	\arrow["\widehat{n}"', dashed, from=1-2, to=2-1]
	\arrow["h", from=1-2, to=2-2]
	\arrow["{\pi_1}"', from=2-1, to=2-2]
\end{tikzcd}\]
commutes by definition of the first component $\widehat{n}(x')$. Now we have to define an element $\bar{\ell'}\in \pca{A}'$ such that
$\ell\in \mathbb{A}'$ such that 
\begin{equation}\label{eq_ultima_forall}
\forall x'\in X'\forall q\in \bar{\alpha}(\widehat{n}(x'))\,(\bar{\ell'}\cdot\langle \varphi'(x'),q\rangle\in \gamma(x')).
\end{equation}
By definition, we have that
$$
    \bar{\alpha} (\widehat{n}(x'))=\bar{\alpha} (h(x'),n(x',-),\lambda w.r\langle \varphi'(x'),w\rangle) =\bigcup_{y\in f^{-1}h(x')}\{\psi(y)\}\otimes\alpha(n(x',y))
$$
hence, to conclude, it is enough to take $\bar{\ell'}$ as $\lambda u.\ell'\cdot \pairing{\pairing{\fprPCA u,\fprPCA(\sprPCA u)},\sprPCA(\sprPCA u)}$. Combing this choice with the hypothesis \eqref{eq_penultima_forall} we can conclude that \eqref{eq_ultima_forall} holds, and hence that $(h,\gamma)\leq \forall_f(g,\alpha)$.
\end{proof}

\begin{theorem} \label{generic}$\InstRedDoc$ has a generic element.
\end{theorem}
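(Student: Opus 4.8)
The plan is to exhibit a generic element explicitly, that is, a partitioned assembly $\Omega$ together with an element $\sigma\in\InstRedDoc(\Omega)$ from which every predicate is recovered by reindexing. Guided by the description of $\InstRedDoc(1)$ as extended Weihrauch degrees (\thref{thm_eW_1_is_Wei}), the idea is that a point of $\Omega$ should encode one ``local'' extended Weihrauch predicate $\pca{A}\to\powerset(\powerset(\pca{A}))$ together with a realizer that is left \emph{completely free}. Concretely, I would set
\[ \Omega:=\big(\pca{A}\times(\powerset(\powerset(\pca{A})))^{\pca{A}},\ \chi\big),\qquad \chi(a,g):=a, \]
so that $\Omega$ is a partitioned assembly whose realizer map is the first projection, while the second coordinate $g\function{\pca{A}}{\powerset(\powerset(\pca{A}))}$ carries the predicate data and is unconstrained by the realizer. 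For the generic element I would take $\sigma:=((S,\theta)\xrightarrow{s}\Omega,\tau)$ with
\[ S:=\{((a,g),p,A)\mid A\in g(p)\},\quad s((a,g),p,A):=(a,g),\quad \theta((a,g),p,A):=\pairing{a,p},\quad \tau((a,g),p,A):=A, \]
so that $s$ is realized by $\fprPCA$ and $\tau$ records the ``universal answer set'' $A\in\powerset(\pca{A})$.

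The first step is to check that $\Omega$ and $(S,\theta)$ are genuine (small) partitioned assemblies and that $s$ is a morphism of $\partAsm(\pca{A},\subpca{A})$, so that $\sigma$ is a well-defined object of $\InstRedDoc(\Omega)$. Then, given an arbitrary $((Y,\psi)\xrightarrow{f}(X,\varphi),\alpha)$ in $\InstRedDoc(X,\varphi)$, I would define the classifying morphism $\{(f,\alpha)\}\function{(X,\varphi)}{\Omega}$ by
\[ \{(f,\alpha)\}(x):=\big(\varphi(x),\,g_x\big),\qquad g_x(p):=\{\alpha(y)\mid y\in f^{-1}(x),\ \psi(y)=p\}. \]
The crucial observation is that this map is realized by the identity combinator $\sPCA\kPCA\kPCA\in\subpca{A}$, precisely because the realizer of $\{(f,\alpha)\}(x)$ is its first coordinate $\varphi(x)$, while the predicate coordinate $g_x$ is carried along purely set-theoretically. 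Computing the pullback of $s$ along $\{(f,\alpha)\}$ yields, up to iso, the display map $\{(x,p,A)\mid A\in g_x(p)\}\to(X,\varphi)$ decorated with $(x,p,A)\mapsto A$; it then remains to prove that this is equivalent, in the fibre $\InstRedDoc(X,\varphi)$, to $((Y,\psi)\xrightarrow{f}(X,\varphi),\alpha)$. I would prove the two inequalities separately using the explicit description of $\leq$: for one direction the fibrewise map is $y\mapsto(f(y),\psi(y),\alpha(y))$ with comparison realizer $\sprPCA$; for the other one selects, for each triple $(x,p,A)$, some $y\in f^{-1}(x)$ with $\psi(y)=p$ and $\alpha(y)=A$ (which exists by definition of $g_x$), again with comparison realizer essentially $\sprPCA$.

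The main obstacle — and the reason for the extra free $\pca{A}$-coordinate in $\Omega$ — is the realizability constraint on $\{(f,\alpha)\}$: the naive choice $|\Omega|=(\powerset(\powerset(\pca{A})))^{\pca{A}}$ would force a computable assignment $\varphi(x)\mapsto\chi(g_x)$, which cannot exist for arbitrary $(f,\alpha)$. Decoupling the realizer from the predicate data removes this difficulty and is exactly what makes the classifying map uniformly realizable by the identity. The remaining points require only bookkeeping: fixing the realizer convention for the pullback (so that a point over $(x,p,A)$ has realizer $\pairing{\varphi(x),p}$, from which $\varphi(x)$ and $p$ are recoverable via $\fprPCA,\sprPCA$), and invoking the meta-theoretic axiom of choice for the selection of preimages $y$ in the second comparison. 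Together with Propositions~\ref{heytobj}, \ref{heytmor} and \ref{right}, this supplies the last ingredient showing $\InstRedDoc$ is a tripos.
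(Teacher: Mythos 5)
Your proof is correct and follows essentially the same route as the paper: the same classifying map $x\mapsto g_x$ with $g_x(p)=\{\alpha(y)\mid y\in f^{-1}(x),\ \psi(y)=p\}$, the same universal domain of triples $(\,\cdot\,,p,A)$ with $A\in g(p)$, and the same two comparison maps (one with realizer $\sprPCA$, the other via the axiom of choice). The only difference is cosmetic: you make the classifying map realizable by adjoining a free $\pca{A}$-coordinate to $\Omega$, whereas the paper achieves the same effect by keeping $|\Omega|=\powerset(\powerset(\pca{A}))^{\pca{A}}$ and using the \emph{constant} realizer $c_{\kPCA}$ (so your remark that the ``naive'' underlying set cannot work is slightly overstated — it works once the realizer is constant).
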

\begin{proof}
As a generic element, we can consider the partitioned assembly $$(\powerset(\powerset(\pca{A}))^{\pca{A}},c_{\kPCA}),$$ where $c_{\kPCA}$ denotes the constant function $\Phi\mapsto \kPCA$, endowed with the element of $\InstRedDoc (\powerset(\powerset(\pca{A}))^{\pca{A}},c_{\kPCA}) $ defined by $(\pi_1,\pi_3)$, with respect to the domain

$$\left(\{(\Phi,a,A)|\,\Phi\in \powerset(\powerset(\mathbb{A}))^{\mathbb{A}},a\in \mathbb{A},A\in \Phi(a)\},\pi_2\right).$$
Let $(f,\alpha)$ be a predicate over $(X,\varphi)$ with $f\function{(Y,\psi)}{(X,\varphi)}$. We have to show that there exists a morphism of partitioned assemblies
\[\chi_{(f,\alpha)}\function{(X,\varphi)}{(\powerset(\powerset(\pca{A}))^{\pca{A}},c_{\kPCA})} \]
(or simply a function because $c_{\kPCA}$ is the constant function) such that 
$$(f,\alpha)=\InstRedDoc_{\chi_{(f,\alpha)}}(\pi_1,\pi_3).$$
Hence, we can consider the function $\chi_{(f,\alpha)}\function{X}{\powerset (\powerset(\pca{A}))^{\pca{A}} }$ sending each $x\in X$ to the function $\chi_{(f,\alpha)}(x)\function{\pca{A}}{\powerset (\powerset(\pca{A}))}$ defined by the assignment 
$$a\mapsto \{\alpha(y)|\,y\in Y, f(y)=x,\psi(y)=a\}.$$

The reindexing $\InstRedDoc_{\chi_{(f,\alpha)}}(\pi_1,\pi_3)$ of the generic element along this morphism can be represented as the first projection  $\pi_1'$ from the partitioned assemblies
$$(Y^*,\psi^*):=(\{(x,\psi(y),\alpha(y))|\,y\in Y, f(y)=x\},(x,\psi(y),\alpha(y))\mapsto \langle  \varphi(x),\psi(y)\rangle)$$
to $(X,\varphi)$ together with the function $\pi_3'\function{Y^*}{\powerset (\pca{A})}$ sending $(x,\psi(y),\alpha(y))$ to $\alpha(y)$.
In order to conclude, we have to show that $(\pi_1',\pi_3')$ is equivalent to $(f,\alpha)$.

It is easy to define a morphism of partitioned assemblies $h\function{(Y,\psi)}{(Y^*,\psi^*)}$ such that the diagram
\[\begin{tikzcd}
	& {(Y,\psi)} \\
	{(Y^*,\psi^*)} & {(X,\varphi)}
	\arrow["h"', dashed, from=1-2, to=2-1]
	\arrow["f", from=1-2, to=2-2]
	\arrow["{\pi_1'}"', from=2-1, to=2-2]
\end{tikzcd}\]
commutes: one just sends every $y\in Y$ to the triple $h(y):=(f(y),\psi(y),\alpha(y))$. This is a morphism of partitioned assemblies since $f$ is so. Moreover, for every $y\in Y$ and for every $q\in \pi_3'\circ h(y)$, i.e.\ for every $q\in \alpha(y)$, we have that $\sprPCA\cdot \langle \psi(y),q\rangle=q\in \alpha(y)$. Thus $(f,\alpha)\leq (\pi_1',\pi_3')$.

Now we show that $(\pi_1',\pi_3')\leq (f,\alpha)$. Using the axiom of choice one can also establish the existence of a function $m\function{(Y^*,\psi^*)}{(Y,\psi)}$ such that the diagram
\[\begin{tikzcd}
	& {(Y^*,\psi^*)} \\
	{(Y,\psi)} & {(X,\varphi)}
	\arrow["m"', dashed, from=1-2, to=2-1]
	\arrow["{\pi_1'}", from=1-2, to=2-2]
	\arrow["f"', from=2-1, to=2-2]
\end{tikzcd}\]
commutes: $m$ sends each triple $(x,a,A)$ in $Y^*$ to an element $y\in Y$ such that $f(y)=x$, $a=\psi(y)$ and $A=\alpha(y)$. Notice that $m$ is a morphism of partitioned assemblies because 
\[\sprPCA\psi^{*}(x,a,A)=\sprPCA\langle \varphi(x),\psi(y)\rangle=\sprPCA\langle \varphi(x),\psi(m(x,a,A))\rangle=\psi(m(x,a,A)).\] 
Finally, for every $(x,a,A)\in Y^{*}$ and every $q\in \alpha(m(x,a,A))$, i.e.\ for every $q\in A$, we have that $\sprPCA\pairing{\pairing{ \varphi(x),\psi(y)}, q} \in \pi_3'(x,a,A)$. Thus, we can conclude that $(\pi_1',\pi_3')\leq (f,\alpha)$.

\end{proof}

\begin{theorem}\thlabel{thm_tripos_ex_wei}
$\InstRedDoc$ is a tripos.
\end{theorem}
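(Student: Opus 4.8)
The plan is to read the statement off directly from the definition of a tripos (\ref{def:tripos}), namely a first order hyperdoctrine (\ref{def:hyperdoctrine}) equipped with weak power objects, by assembling the structural results already established for $\doctrine{\partAsm(\pca{A},\subpca{A})}{\InstRedDoc}$. No genuinely new construction is needed at this point: all the relevant logical structure has been built by hand in Propositions~\ref{heytobj}, \ref{heytmor}, \ref{right} and in Theorem~\ref{generic}, so the remaining task is to check that these pieces combine exactly as the definition of a tripos demands.

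First I would confirm that $\InstRedDoc$ is a first order hyperdoctrine. Since $\InstRedDoc=\EWeiElementary^{\exists}$ is by definition a full existential completion, \thref{thm: compex P is existential and RC} guarantees that it is an existential doctrine, i.e.\ that every reindexing has a left adjoint satisfying Beck--Chevalley and Frobenius reciprocity. Proposition~\ref{heytobj} shows that each fibre $\InstRedDoc(X,\varphi)$ is a Heyting algebra, and Proposition~\ref{heytmor} shows that each reindexing is a morphism of Heyting algebras. Finally, Proposition~\ref{right} supplies a right adjoint $\forall_f$ to reindexing along \emph{every} arrow $f$ --- in particular along every projection --- and these satisfy Beck--Chevalley (as already observed in \ref{rem_BCC_forall}, this last point is automatic once the left adjoints satisfy it). These are precisely the clauses of \ref{def:hyperdoctrine}, so $\InstRedDoc$ is a first order hyperdoctrine.

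It then remains to produce weak power objects, and the cleanest route is through the weak predicate classifier. Theorem~\ref{generic} already exhibits a generic element, which is exactly a weak predicate classifier $(\Omega,\sigma)$. To upgrade this to weak power objects I would invoke \thref{rem:weak cart closed power ob iff pred clas}: provided the base category is weakly cartesian closed, a weak predicate classifier induces weak power objects via $\mathrm{P}A:=\Omega^A$ and $\in_A:=\InstRedDoc_{\mathsf{ev}}(\sigma)$. The step I expect to require the most care is therefore the verification that $\partAsm(\pca{A},\subpca{A})$ is weakly cartesian closed --- this is the same fact underpinning the tripos structure of the weak subobject doctrine in \thref{ex:weak_sub_partition_asm_tripos}, and it follows from the existence of weak exponentials in partitioned assemblies \cite{menni2002}. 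Granting this, \thref{rem:weak cart closed power ob iff pred clas} yields weak power objects, and combining them with the hyperdoctrine structure established above shows, via \ref{def:tripos}, that $\InstRedDoc$ is a tripos.
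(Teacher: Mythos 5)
Your proof is correct and follows essentially the same route as the paper, which likewise concludes by combining Propositions~\ref{heytobj}, \ref{heytmor}, \ref{right} with Theorems~\ref{generic} and \ref{exists}. The only difference is that you make explicit the passage from the generic element to weak power objects via the weak cartesian closedness of $\partAsm(\pca{A},\subpca{A})$ (as in \thref{rem:weak cart closed power ob iff pred clas} and \thref{ex:weak_sub_partition_asm_tripos}), a step the paper leaves implicit.
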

\begin{proof}This is a direct consequence of Propositions \ref{heytobj}, \ref{heytmor} and \ref{right}, and Theorems \ref{generic} and \ref{exists}.
\end{proof}

\begin{remark}
    Notice that in \cite{trottavalenti2025}, a doctrine was already introduced such that its fibre on the terminal object corresponds to extended Weihrauch degrees, and such a doctrine was constructed as the \emph{pure} existential completion (freely adding left adjoints just along product projections) of a more basic doctrine. However, the doctrine introduced in \cite{trottavalenti2025} is not a tripos. The main reason is that the pure existential completion is the minimal construction we need, in order to have that the fibres on the terminal abstract the desired degrees. However, it does not add enough structure to obtain a tripos. The full existential completion maintains the same fibre over the terminal object (hence it still abstract extended Weihrauch degrees) but by freely adding \emph{all} the left adjoints we get more structure on the other fibres. Indeed, in this case we obtain a tripos~\thref{thm_tripos_ex_wei}.

\end{remark}

\section{A tripos for extended Weihrauch degrees}
The main purpose of this section is to present a tripos for extended Weihrauch degrees. 

The main intuition is that $\InstRedDoc$ represents a direct categorification of the notion of instance reduction between realizability predicate. Indeed, \thref{thm_eW_1_is_Wei} holds just by definition of $\InstRedDoc(1)$. 

We are going to introduce a second tripos, which will abstract the notion of extended Weihrauch degrees almost by definition, and then prove that this is equivalent to $\InstRedDoc$.

\begin{definition}[extended Weihrauch doctrine]\thlabel{def_ex_deg_doctrine}
    The {\bf extended Weihrauch doctrine} $\doctrine{\partAsm(\pca{A},\subpca{A})}{\EWei}$ is defined as follows: for every object $(X,\varphi)$ of $\partAsm(\pca{A},\subpca{A})$, the elements of $\EWei(X,\varphi)$ are functions $f\function{X\times \pca{A}}{\powerset(\powerset(\pca{A}))}$. For every pair of maps $f,g$ in $\powerset(\powerset(\pca{A}))^{X\times \pca{A}}$, we define $f \EWeiDoctrineOrder g$ if and  only if there exist $\ell_1,\ell_2\in \mathbb{A}'$ such that for every $ (x,a)\in X\times  \mathbb{A}$ such that $f(x,a)\neq \emptyset$
    \begin{itemize}
        \item $\ell_1\cdot \pairing{\varphi(x),a}\downarrow$ and $g(x,\ell_1\cdot \pairing{\varphi(x),a})\neq \emptyset$;
        \item for every $A\in f(x,a)$ there exists $ B\in g(x,\ell_1\cdot \pairing{ \varphi(x),a})$ such that for every $ q\in B$ we have $\ell_2\cdot \pairing{a,q}\in A$.
    \end{itemize}
    As usual, $\EWei (X,\varphi)$ is the posetal reflection of $(\powerset(\powerset(\pca{A}))^{X},\EWeiDoctrineOrder$). The action of $\EWei$ on a morphism of partitioned assemblies $$h\function{(Y,\psi)}{(X,\varphi)}$$
is defined as follows:
$$\EWei_{h}:\EWei(X,\varphi)\rightarrow \EWei(Y,\psi)$$
$$g\mapsto \left((y,a)\mapsto \begin{cases}g(h(y),\sprPCA a)\text{ if }\fprPCA a= \psi(y)\\
\emptyset \text{ otherwise}\end{cases}\right)$$
\end{definition}
 It is now immediate to see that:
\begin{theorem}\thlabel{thm_EWei(1) is ext weih}
    The fibre $\EWei (1)$ corresponds exactly to the extended Weihrauch degrees.
\end{theorem}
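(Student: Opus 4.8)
The plan is to unwind both definitions over the terminal object and check that the resulting preorder is literally the extended Weihrauch reducibility of \thref{def:extwei_relation}, up to a harmless reshuffling of the first realizer. First I would fix the terminal object of $\partAsm(\pca{A},\subpca{A})$ to be $1=(\{*\},c)$, where $c\in\subpca{A}$ is a fixed element (say $c:=\kPCA$); any such choice yields a terminal object, since the constant map $\lambda x.c$ is realized in $\subpca{A}$, and different choices give isomorphic objects. With this presentation, an element of $\EWei(1)$ is a function $\{*\}\times\pca{A}\to\powerset(\powerset(\pca{A}))$, which is canonically the same datum as a function $\pca{A}\to\powerset(\powerset(\pca{A}))$, i.e.\ an extended Weihrauch predicate in the sense of \thref{def:extwei_relation}. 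So the underlying sets of $\EWei(1)$ (before quotienting) and of extended Weihrauch predicates coincide.

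Next I would specialize the order $\EWeiDoctrineOrder$ of \thref{def_ex_deg_doctrine} to $X=\{*\}$ and $\varphi(*)=c$. Writing $f,g\function{\pca{A}}{\powerset(\powerset(\pca{A}))}$, the relation $f\EWeiDoctrineOrder g$ unfolds to: there are $\ell_1,\ell_2\in\subpca{A}$ such that for every $a\in\pca{A}$ with $f(a)\neq\emptyset$ we have $\ell_1\cdot\pairing{c,a}\downarrow$, $g(\ell_1\cdot\pairing{c,a})\neq\emptyset$, and for every $A\in f(a)$ there is $B\in g(\ell_1\cdot\pairing{c,a})$ with $\ell_2\cdot\pairing{a,q}\in A$ for all $q\in B$. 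Comparing with \thref{def:extwei_relation} (where $p$ plays the role of $a$), the two statements differ only in that the first realizer is applied to $\pairing{c,a}$ rather than to $a$ directly, while the clause involving $\ell_2$ is identical.

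I would then prove that the two preorders coincide by translating the first realizer in both directions, using combinatory completeness of $\subpca{A}$. From an extended Weihrauch reduction with witnesses $\ell_1,\ell_2$, I set $\ell_1':=\lambda w.\ell_1(\sprPCA w)\in\subpca{A}$, so that $\ell_1'\cdot\pairing{c,a}\simeq\ell_1\cdot a$, and keep $\ell_2$; this yields $f\EWeiDoctrineOrder g$. Conversely, from $f\EWeiDoctrineOrder g$ with witnesses $\ell_1,\ell_2$, I set $\ell_1'':=\lambda a.\ell_1\cdot\pairing{c,a}\in\subpca{A}$ (here I use that $c\in\subpca{A}$ and that pairing is representable in $\subpca{A}$), so that $\ell_1''\cdot a\simeq\ell_1\cdot\pairing{c,a}$, and again keep $\ell_2$, obtaining $f\extweireducible g$. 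Hence $\EWeiDoctrineOrder$ restricted to $\EWei(1)$ and $\extweireducible$ are the same preorder, and therefore their poset reflections agree; since $\EWei(1)$ is by definition this poset reflection, it is exactly the poset of extended Weihrauch degrees.

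The calculations are entirely routine; the only point requiring a little care is the bookkeeping around the terminal object, namely checking that the chosen realizer $c$ lies in $\subpca{A}$ so that the reshuffled realizers $\ell_1',\ell_1''$ remain in $\subpca{A}$, and confirming that the $\ell_2$-clause transfers verbatim because $p$ and $a$ denote the same element. There is no genuine obstacle beyond this definitional unwinding.
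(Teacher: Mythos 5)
Your proof is correct and is exactly the definitional unwinding that the paper leaves implicit (it states the result as ``immediate''): the only non-trivial content is the translation of the first realizer between $\ell_1\cdot a$ and $\ell_1\cdot\pairing{c,a}$, which you handle correctly via $\lambda w.\ell_1(\sprPCA w)$ and $\lambda a.\ell_1\cdot\pairing{c,a}$, noting that $c\in\subpca{A}$. Nothing further is needed.
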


Now we are going to show that $\EWei$ and $\InstRedDoc$ are equivalent doctrines, and hence that $\EWei$ is a tripos.  Notice that \thref{thm_ex_descr_wei_doc} can be seen as a fibrational version of \thref{prop_ext_weih_equiv_instance_reduction}, namely of Bauer's result showing that  Weihrauch reductions and instance reductions are equivalent see~\cite[Prop. 8]{Bauer2021}. 

\begin{theorem}\thlabel{thm_ex_descr_wei_doc}

For every partitioned assembly $(X,\varphi)$, we have a natural isomorphism $$\InstRedDoc(X,\varphi)\cong\EWei(X,\varphi).$$
Thus, $\InstRedDoc$ and $\EWei$ are isomorphic doctrines.
\end{theorem}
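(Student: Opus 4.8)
The plan is to exhibit an explicit order isomorphism between the two fibres and then check that it commutes with reindexing. The guiding idea is already contained in the proof of Theorem~\ref{generic}: the characteristic map used there, after uncurrying, is exactly the comparison I want. Concretely I would define a monotone map
$$\Theta\function{\InstRedDoc(X,\varphi)}{\EWei(X,\varphi)}$$
sending a representative $((Y,\psi)\xrightarrow{f}(X,\varphi),\alpha)$ to the function
$$\Theta(f,\alpha)(x,a):=\{\alpha(y)\mid y\in Y,\ f(y)=x,\ \psi(y)=a\},$$
and a monotone map $\Xi$ in the other direction sending $F\function{X\times\pca{A}}{\powerset(\powerset(\pca{A}))}$ to the pair whose domain is the partitioned assembly $(Y_F,\psi_F)$ with $Y_F:=\{(x,a,A)\mid A\in F(x,a)\}$ and $\psi_F(x,a,A):=\pairing{\varphi(x),a}$, whose structural map to $(X,\varphi)$ is the first projection, and whose predicate is $\alpha_F(x,a,A):=A$.

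The heart of the argument is a dictionary between the two reducibility conditions: the morphism of partitioned assemblies $h$ witnessing an inequality in $\InstRedDoc$ plays the role of the first realizer $\ell_1$ transforming input names in $\EWei$, while the realizer $\ell\in\subpca{A}$ witnessing $\ell\cdot\pairing{\psi(y),q}\in\alpha(y)$ plays the role of the second realizer $\ell_2$ post-processing outputs. First I would verify $\Theta$ is monotone: given $(f,\alpha)\le(g,\beta)$ via $h$ (realized by $r_h\in\subpca{A}$) and $\ell$, I set $\ell_1:=\lambda p.\,r_h\cdot(\sprPCA p)$ so that $\ell_1\cdot\pairing{\varphi(x),a}$ computes the name $\eta(h(y))$ of $h(y)$, take the witness $B:=\beta(h(y))$, and observe $\ell_2:=\ell$ works directly since $\psi(y)=a$ and $\alpha(y)=A$. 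Symmetrically, to see $\Xi$ is monotone, given $F\EWeiDoctrineOrder G$ via $\ell_1,\ell_2$ I would define $h$ on $Y_F$ by $(x,a,A)\mapsto(x,\ell_1\cdot\pairing{\varphi(x),a},B)$ for a $B$ chosen by the axiom of choice, realized by $r:=\lambda p.\,\pairing{\fprPCA p,\ell_1\cdot p}$, and recover the instance-reducibility realizer as $\ell:=\lambda p.\,\ell_2\cdot\pairing{\sprPCA(\fprPCA p),\sprPCA p}$, which accounts for the one reshuffling of pairings needed in this direction.

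Next I would check $\Theta$ and $\Xi$ are mutually inverse up to the two equivalence relations. For $\Xi\Theta$ one recovers a pair whose domain consists of triples $(x,a,A)$ realized by those $y$ with $f(y)=x$, $\psi(y)=a$, $\alpha(y)=A$; the map $y\mapsto(f(y),\psi(y),\alpha(y))$ and a section in the reverse direction (axiom of choice) give both inequalities, realized by $\sprPCA$. For $\Theta\Xi$ one computes $\Theta(\Xi(F))(x,b)=F(x,\sprPCA b)$ when $\fprPCA b=\varphi(x)$ and $\emptyset$ otherwise, which is $\EWeiDoctrineOrder$-equivalent to $F$ using the identity and $\sprPCA$ as reduction realizers. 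Finally, naturality amounts to comparing, for $h\function{(X',\varphi')}{(X,\varphi)}$, the two routes around the square built from $\Theta$, $\InstRedDoc_h$ and $\EWei_h$: expanding both via the explicit pullback $P=\{(x',y)\mid h(x')=f(y)\}$ with name $\pairing{\varphi'(x'),\psi(y)}$ and the case split in the definition of $\EWei_h$, one finds both composites equal to $\{\alpha(y)\mid f(y)=h(x'),\ \psi(y)=\sprPCA a\}$ when $\fprPCA a=\varphi'(x')$ and $\emptyset$ otherwise, so the square commutes; naturality of $\Xi$ then follows formally as it is the inverse.

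The main obstacle I anticipate is bookkeeping rather than conceptual: keeping the nested pairings straight so that $\ell_1,\ell_2$ on the $\EWei$ side and the pair $(h,\ell)$ on the $\InstRedDoc$ side are genuinely definable in $\subpca{A}$ and compose correctly, and in particular checking well-definedness against the fact that the naming functions $\varphi,\psi,\eta$ need not be injective (two elements may share a realizer, so one must verify that $\ell_1\cdot\pairing{\varphi(x),a}$ produces a name compatible with \emph{every} $y$ lying over $(x,a)$, which holds precisely because $r_h$ realizes $h$). This is exactly the fibred incarnation of Bauer's equivalence between extended Weihrauch and instance reducibility, and the verifications, though numerous, are routine once the dictionary above is fixed.
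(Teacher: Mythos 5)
Your proposal matches the paper's proof essentially verbatim: the maps $\Theta$ and $\Xi$ are exactly the paper's $F_{(X,\varphi)}$ and $G_{(X,\varphi)}$, the realizers $\ell_1=\lambda p.\,r_h\cdot(\sprPCA p)$, $\ell_2=\ell$ and $\ell=\lambda p.\,\ell_2\cdot\pairing{\sprPCA(\fprPCA p),\sprPCA p}$ coincide with the paper's choices, and the use of the axiom of choice and the naturality check proceed identically. The argument is correct and takes the same route as the paper.
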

\begin{proof}
Consider the function $F_{(X,\varphi)}$ sending an element $((Y,\psi)\xrightarrow{f} (X,\varphi),\alpha)$ to $$F_{(X,\varphi)}(f,\alpha):X\times \mathbb{A}\rightarrow \powerset(\powerset(\mathbb{A}))$$
$$(x,a)\mapsto \{\alpha(y)|\,y\in f^{-1}(x),\psi(y)=a\}$$
We start by checking that this function preserves the order, i.e.\ that is a morphism of posets. Hence, let us suppose that $(f,\alpha)\leq (g,\beta)$, where the domain of $f$ is $(Y,\psi)$ and the domain of $g$ is $(Z,\eta)$. By definition, this means that there exists a morphism of partitioned assemblies $h\function{(Y,\psi)}{(Z,\eta)}$ such that $g\circ h=f$ and $\alpha \EWeiOrderElementary \EWeiElementary_h (\beta)$, i.e.\ there exists $\ell\in \mathbb{A}'$ such that
\begin{equation}\label{eq_equivalenza_fibre_1}
    \forall y\in Y\,\forall q\in \beta(h(y))\,\ell\cdot \langle \psi(y),q\rangle\in \alpha(y)
\end{equation}
We can define $\ell_1$ and $\ell_2$ as prescribed by the definition of $F_{(X,\varphi)}(f,\alpha)\EWeiDoctrineOrder F_{(X,\varphi)}(g,\beta)$ as follows: $\ell_1:=\lambda \xi.r(\sprPCA\xi)$ and $\ell_2:=\ell$, where $r$ is a realizer for $h$ to be a morphism of partitioned assemblies. 
Indeed, for every $(x,a)\in X\times \pca{A}$ such that $F_{(X,\varphi)}(f,\alpha)(x,a) \neq \emptyset$ we have that 
$\ell_1\cdot \pairing{\varphi (x),a}\downarrow$, because $f(y)=x$ and $\psi (y)=a$, and $F_{(X,\varphi)}(g,\beta)(x,\ell_1\cdot \pairing{\varphi (x),a})\neq \emptyset$, because $h$ is a morphism of partitioned assemblies such that $f=g\circ h$. Moreover, for every $A\in F(f,\alpha)(x,a) $, i.e.\ for every $\alpha (y)$, where $y\in f^{-1}(x)$ and $\psi (y)=a$, we can choose $B\in F_{(X,\varphi)}(g,\beta)(x,\ell_1\cdot \pairing{\varphi (x),a}$ as $B:=\beta (h(y))$ and this satisfies the condition $\ell_2\cdot \pairing{a,q}\in A$ because of \eqref{eq_equivalenza_fibre_1}. Thus $f\leq g$ implies $F_{(X,\varphi)}(f,\alpha)\EWeiDoctrineOrder F_{(X,\varphi)}(g,\beta)$, so $F_{(X,\varphi)}$ is monotone.

Conversely, we can define a function $G_{(X,\varphi)}$ sending $g:X\times \mathbb{A}\rightarrow \powerset(\powerset(\mathbb{A}))$ to 
$$G(g):=(\pi_1\function{X_g}{X},\pi_3\function{X_g}{\powerset(\pca{A}}))$$
where $\pi_1$ and $\pi_3$ are the first and third projections defined on the partitioned assembly:
$$X_g:=(\{(x,a,A)|A\in g(x,a)\},\varphi_g)$$
where $\varphi_g(x,a,A):=\pairing{\varphi(x),a}$. Now let us suppose that 
$g\EWeiDoctrineOrder h$, i.e.\  
\begin{equation}\label{eq_equivalenza_fibre_2}
    \exists \ell_1,\ell_2\in \mathbb{A}'\forall x\in X\forall a\in \mathbb{A}\forall A\in g(x,a)\exists B\in h(x,\ell_1\cdot \langle \varphi(x),a\rangle)\forall q\in B(\ell_2\cdot \langle a,q\rangle\in A)
\end{equation}
Once an $\ell_1$ and an $\ell_2$ are fixed, using the axiom of choice, we can define a function $(x,a, A)\mapsto \overline{B}$ choosing for every $(x,a,A)$ one of the sets $\overline{B}\in h(x,\ell_1\cdot \langle \varphi(x),a\rangle)$ satisfying the previous condition.
In particular, we can define a define a function 
$$L\function{X_g}{X_h}$$
$$(x,a,A)\mapsto (x,\ell_1\cdot \langle \varphi(x),a\rangle, \overline{B})$$
that is also a morphism of partitioned assemblies. Moreover, by definition, the diagram
\[\begin{tikzcd}
	& {X_g} \\
	{X_h} & X
	\arrow["L"', from=1-2, to=2-1]
	\arrow["{\pi_1}", from=1-2, to=2-2]
	\arrow["{\pi_1'}"', from=2-1, to=2-2]
\end{tikzcd}\]
commutes. Now we have to check that $\pi_3\EWeiOrderElementary \pi_3'\circ L$, i.e.\ that there exists $\ell\in \pca{A}'$ such that
\[\forall (x,a,A)\in X_g,\forall q\in \pi_3'\circ L(x,a,A), \ell \cdot \pairing {\varphi_g(x,a,A),q}\in \pi_3(x,a,A)\]
namely,
\[\forall (x,a,A)\in X_g,\forall q\in \overline{B}, \ell \cdot \pairing {\pairing{\varphi(x),a},q}\in A\]
but easily follows by \eqref{eq_equivalenza_fibre_2} and by  considering $\ell:=\lambda \xi.\ell_2\cdot \langle \sprPCA(\fprPCA\xi) ,\sprPCA\xi\rangle$. Hence we get that $G_{(X,\varphi)}(g)\leq G_{(X,\varphi)}(h)$. Thus also $G_{(X,\varphi)}$ is monotone.

Finally, it is direct to check that both $G_{(X,\varphi)}\circ F_{(X,\varphi)}$ and $F_{(X,\varphi)}\circ G_{(X,\varphi)}$ are identities. Indeed, one can immediately check that 
$$F_{(X,\varphi)}(G_{(X,\varphi)}(g))(x,a)=\{ A\in g(x,a')| \pairing{\varphi (x),a'}=a\}$$ for every $g:X\times \mathbb{A}\rightarrow \powerset(\powerset(\mathbb{A}))$. Hence $F_{(X,\varphi)}(G_{(X,\varphi)}(g)) \EWeiDoctrineOrder g$ via $\ell_1:=\lambda \xi. \sprPCA \sprPCA\xi$, $\ell_2\sprPCA$ and $g\EWeiDoctrineOrder F_{(X,\varphi)}(G_{(X,\varphi)}(g)) $ via $\ell_1:=\kPCA$ and $\ell_2:=\sprPCA$. Thus, $F\circ G$ is the identity.

Similarly, one can prove that $G_{(X,\varphi)}\circ F_{(X,\varphi)}$ is the identity too. The proof that $(f,\alpha)\leq G_{(X,\varphi)}(F_{(X,\varphi)}(f,\alpha))$ is trivial (one can just consider the morphism of partitioned assemblies $y\mapsto (\psi(y),f(y),\alpha(y))$), while in order to show that $G_{(X,\varphi)}(F_{(X,\varphi)}(f,\alpha))\leq (f,\alpha)$ one has to use the axiom of choice.
It remains to prove the naturality, i.e.\ one only needs to check that for every $k\function{(X,\varphi)}{(X',\varphi')}$ the square

\[\begin{tikzcd}
	\InstRedDoc (X',\varphi') & \InstRedDoc (X,\varphi)  \\
	\EWei (X',\varphi')  & \EWei (X,\varphi)
	\arrow["\InstRedDoc_k", from=1-1, to=1-2]
	\arrow["F_{ (X',\varphi') }"', from=1-1, to=2-1]
	\arrow["F_{(X,\varphi)} ", from=1-2, to=2-2]
	\arrow["\EWei_k"', from=2-1, to=2-2]
\end{tikzcd}\]
commutes. By definition, for every element $(f:(Y,\psi)\rightarrow (X',\varphi'),\alpha)$ of $\InstRedDoc (X',\varphi')$, one has that
$$\EWei_k(F_{(X',\varphi')}(f,\alpha))(x,a)=\begin{cases}F_{(X',\varphi')}(f,\alpha)(k(x),\sprPCA a)\text{ if }\fprPCA a= \varphi(x)\\
\emptyset \text{ otherwise}\end{cases}$$
that is
$$\EWei_k(F_{(X',\varphi')}(f,\alpha))(x,a)=\begin{cases}\{\alpha(y)|\,y\in f^{-1}(k(x)),\psi(y)=\sprPCA a\}\text{ if }\fprPCA a= \varphi(x)\\
\emptyset \text{ otherwise}\end{cases}$$
On the other hand, $(F_{(X,\varphi)} \circ \InstRedDoc_k(f,\alpha))(x,a) $ is defined as follows: first, by definition we have that 
$$\InstRedDoc_k(f,\alpha):=(k^*f\function{(P,\pi)}{(X,\varphi),\alpha \circ f^*k)}$$
where $P:=\{(x,y)\in X\times Y| f(y)=k(x)\}$. Hence, we have that 
\[F_{(X,\varphi)} (\InstRedDoc_k(f,\alpha))(x,a) =\{\alpha \circ f^*k(x,y)| (x,y)\in (k^*f)^{-1} (x), \pi(x,y)=a\}\]
i.e.\ 
\[F_{(X,\varphi)} (\InstRedDoc_k(f,\alpha))(x,a) =\{\alpha (y)| y\in f^{-1} (k(x)), \pairing{\varphi (x),\psi (y)}=a\}.\]
Then we can conclude that $\EWei_k F_{(X',\varphi')}(f,\alpha))=F_{(X,\varphi)} (\InstRedDoc_k(f,\alpha))$.
\end{proof}
We recall that the dialectica construction can be presented categorically in terms of existential and universal completion of doctrines or fibrations, see \cite{hofstra2011,dePaiva1989dialectica,trotta23TCS}.
Combining the previous result with \thref{prop:eiR is universal com} we can prove the extended Weihrauch doctrine is an instance of the \emph{full or generalized dialectica completion}. 
\begin{corollary}
    The doctrine $\doctrine{\partAsm(\pca{A},\subpca{A})}{\EWei}$ is the (full) dialectica completion $(\mathsf{t}^{\forall})^{\exists}$ the trivial doctrine $\doctrine{\partAsm(\pca{A},\subpca{A})}{\mathsf{t}}$.
\end{corollary}

\section{A topos for extended Weihrauch degrees }
The main purpose of this section is to define a topos for extended Weihrauch degrees and show how this topos is related to a (relative) realizability topos~\cite{BIRKEDAL2002115,hyland1982effective}. 

The tool we are going to use is the so-called \emph{tripos-to-topos} construction~\cite{pitts02,hyland89}, namely the free construction producing a topos starting from a tripos. 

\subsection{The tripos-to-topos}
The tripos-to-topos construction has been introduced in order to show that, from an abstract perspective, realizability and localic toposes can be regarded as instances of the same construction. 

The main intuition is that the topos defined by the tripos-to-topos is given by taking partial equivalence relations and functional relations according to the logic of the  starting tripos.

\bigskip
\noindent
\textbf{Tripos-to-topos.} Given a tripos $\doctrine{\mC}{P}$, the category $\mathsf{T}_P$ consists of:
 
\medskip
\noindent 
\textbf{objects:} pairs $(A,\rho)$ such that $\rho \in P(A\times A)$  satisfies

\begin{itemize}

\item \emph{symmetry:} $\rho\leq P_{\angbr{\pr_2}{\pr_1}}(\rho)$;
\item \emph{transitivity:} $P_{\angbr{\pr_1}{\pr_2}}(\rho)\wedge P_{\angbr{\pr_2}{\pr_3}}(\rho)\leq P_{\angbr{\pr_1}{\pr_3}}(\rho)$, where $\pr_i$ are the projections from $A\times A\times A$;
\end{itemize} 
\textbf{arrows:} $\freccia{(A,\rho)}{\phi}{(B,\sigma)}$ are objects $\phi\in P(A\times B)$ such that
\begin{enumerate}
\item $\phi\leq P_{\angbr{\pr_1}{\pr_1}}(\rho)\wedge P_{\angbr{\pr_2}{\pr_2}}(\sigma)$;
\item $P_{\angbr{\pr_1}{\pr_2}}(\rho)\wedge P_{\angbr{\pr_1}{\pr_3}}(\phi)\leq P_{\angbr{\pr_2}{\pr_3}}(\phi)$ where $\pr_i$ are projections from $A\times A\times B$; 
\item $P_{\angbr{\pr_2}{\pr_3}}(\sigma)\wedge P_{\angbr{\pr_1}{\pr_2}}(\phi)\leq P_{\angbr{\pr_1}{\pr_3}}(\phi)$ where $\pr_i$ are projections from $A\times B\times B$; 
\item $P_{\angbr{\pr_1}{\pr_2}}(\phi)\wedge P_{\angbr{\pr_1}{\pr_3}}(\phi)\leq P_{\angbr{\pr_2}{\pr_3}}(\sigma)$ where $\pr_i$ are projections from $A\times B\times B$; 
\item $P_{\Delta_A}(\rho)\leq \exists_{\pr_1}(\phi)$ where $\pr_1$ is the first projection from $A\times B$.
\end{enumerate}
\textbf{composition:} 
If $\phi:(A,\rho)\rightarrow (B,\sigma)$ and $\psi:(B,\sigma)\rightarrow (C,\eta)$ are arrows, their composition is defined as $\exists_{\langle \pi_1,\pi_3\rangle}(P_{\langle \pi_1,\pi_2\rangle}(\phi)\wedge P_{\langle \pi_2,\pi_3\rangle}(\psi))$, where $\pi_i$ are projections from $A\times B\times C$.
\medskip
\noindent

From a logical perspective, the five conditions describing the arrows in the tripos-to-topos construction can be interpreted as follows: 1. ensures that the relation $\phi$ has the correct domain and codomain; 2.-3. express that $\phi$ is compatible with equality; 4. states that $\phi$ is functional; and 5. that it is total.
%

\begin{theorem}
	Let $\doctrine{\mC}{P}$ be a tripos. Then $\mathsf{T}_P$ is a topos.
\end{theorem}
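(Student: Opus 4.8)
The plan is to verify directly that $\mathsf{T}_P$ satisfies the three defining properties of an elementary topos: it has all finite limits, it is cartesian closed, and it possesses a subobject classifier. Throughout, the guiding intuition is that $\mathsf{T}_P$ is the category of \emph{$P$-valued sets}: an object $(A,\rho)$ is a carrier $A$ together with a $P$-valued equality $\rho$, an arrow is a functional relation, and the first-order and higher-order structure of the tripos is exactly the internal logic needed to manipulate these. This reading dictates which predicate to write down for each construction; the work is then to check that the resulting data satisfy conditions (1)--(5) for arrows and the relevant universal properties.

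First I would construct the finite limits, which require only the existential and Heyting structure of $P$. The terminal object is $\mathbf{1}=(1,\top)$, where $1$ is terminal in $\mC$ and $\top\in P(1)$; the unique arrow $(A,\rho)\to\mathbf{1}$ is represented by $P_{\Delta_A}(\rho)$ reindexed along $\pr_1\colon A\times 1\to A$. The product of $(A,\rho)$ and $(B,\sigma)$ has carrier $A\times B$ with equality predicate $P_{\angbr{\pr_1}{\pr_3}}(\rho)\wedge P_{\angbr{\pr_2}{\pr_4}}(\sigma)$ on $(A\times B)\times(A\times B)$, the projections being the evident functional relations; one verifies the universal property by pairing two functional relations with a meet. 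The equalizer of $\phi,\psi\colon(A,\rho)\to(B,\sigma)$ is the subobject of $(A,\rho)$ cut out by the strict extensional predicate asserting that $\phi$ and $\psi$ take equal values, formed with the help of $\exists$ along a projection. Having a terminal object, binary products and equalizers yields all finite limits.

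Next I would produce the subobject classifier and the exponentials, and this is where the weak power objects of the tripos are indispensable. Subobjects of $(A,\rho)$ are represented by \emph{strict, extensional} predicates $\phi\in P(A)$, namely those with $\phi\leq P_{\Delta_A}(\rho)$ (strictness) and $P_{\angbr{\pr_1}{\pr_2}}(\rho)\wedge P_{\pr_1}(\phi)\leq P_{\pr_2}(\phi)$ (extensionality), where $\pr_1,\pr_2$ are the projections from $A\times A$. The classifier is $\Omega_{\mathsf{T}}=(\mathrm{P}1,\delta)$, where $\mathrm{P}1$ is the weak power object of the terminal with its generic element, and $\delta\in P(\mathrm{P}1\times\mathrm{P}1)$ is logical equivalence of the two classified predicates; the morphism $\mathsf{true}\colon\mathbf{1}\to\Omega_{\mathsf{T}}$ names $\top$, and the characteristic map of a subobject sends a point to the predicate it determines, realized as a genuine arrow of $\mC$ via the universal property of $\mathrm{P}1$. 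Dually, the exponential $(B,\sigma)^{(A,\rho)}$ lives on the weak power object $\mathrm{P}(A\times B)$: its carrier predicate selects those $P$-subsets of $A\times B$ that are total single-valued functional relations from $(A,\rho)$ to $(B,\sigma)$ (conditions (1)--(5) internalised), and its equality is extensional equality of such functions, with evaluation and transpose read off from the membership element $\in_{A\times B}$.

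I expect the main obstacle to be the cartesian closure step, that is, the construction and verification of exponentials (equivalently, of power objects, from which both exponentials and the subobject classifier can be derived at once). Two points make this delicate. First, one must express the five conditions defining an arrow of $\mathsf{T}_P$ as a single predicate in the internal higher-order logic and check that it really carves out a well-defined object; this is a lengthy but mechanical use of reindexing and meets. Second, and more essentially, proving the universal property requires that existential quantification interact correctly with substitution and conjunction, which is precisely what the Beck--Chevalley condition and Frobenius reciprocity of Definition \ref{def existential doctrine} guarantee: they make the image factorisations and the currying isomorphisms stable under reindexing, so that the bijection between arrows $(C,\eta)\times(A,\rho)\to(B,\sigma)$ and arrows $(C,\eta)\to(B,\sigma)^{(A,\rho)}$ is natural. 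Once these stability properties are in hand, the verification that $\mathsf{T}_P$ is a topos reduces to the bookkeeping sketched above, recovering the original tripos-to-topos theorem of \cite{hyland89,pitts02}.
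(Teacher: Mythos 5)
The paper does not prove this statement itself --- it recalls it from Hyland--Johnstone--Pitts \cite{hyland89,pitts02} --- and your outline is the standard argument from those sources: finite limits from the existential and Heyting structure, and power objects (hence exponentials and the subobject classifier) from the weak power objects of the tripos, with the weakness (non-uniqueness) of the naming maps $\{\beta\}_X$ repaired exactly as you indicate, by taking logical equivalence of classified predicates as the equality on $\mathrm{P}1$ and $\mathrm{P}(A\times B)$. The constructions you name are the right ones, and the places you flag as delicate --- expressing functionality internally and using Beck--Chevalley and Frobenius to make the universal properties stable under reindexing --- are indeed where the real work lies, so this matches the intended proof.
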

The following examples are discussed in \cite{pitts02,hyland89}, and they show that realizability and localic toposes can be presented as instances of the same abstract construction.
\begin{example}
The topos $\mathsf{T}_{\mathsf{A}}$ associated to the localic tripos $\doctrine{\set}{\mathsf{A}^{(-)}}$ is equivalent to the category of sheaves $\mathsf{sh}(\mathsf{A})$ of the locale $\mathsf{A}$.
\end{example}
\begin{example}\thlabel{ex_RT_as_tripos_to_topos}
Given a PCA $\pca{A}$, the tripos-to-topos $\mathsf{T}_{\mathcal{P}}$ of the realizability tripos $\doctrine{\set}{\mathcal{P}}$ is equivalent to the realizability topos $\mathsf{RT}(\pca{A})$.
\end{example}

We recall that the problem of understanding the universal property of the tripos-to-topos construction is non-trivial, and depends on the categorical setting we consider. Indeed, one can study this construction by considering the 2-categories toposes and triposes (with certain morphisms). Frey proved in~\cite{Frey2011,FREY2015} that in this setting the tripos-to-topos does not raise to an ordinary bi-adjunction, but to a \emph{special bi-adjunction}.

Maietti and Rosolini considered in~\cite{maiettirosolini13b,maiettirosolini13a,maiettipasqualirosolini} the tripos-to-topos in the general context of \emph{elementary existential doctrines}, showing that in this setting the construction enjoys the universal property of being an \emph{exact completion} of an elementary existential doctrine. In particular, it gives rise to a bi-adjunction between the 2-category of existential doctrines $\mathsf{ED}$ and the 2-category $\Excat$ of exact categories and exact functors. 

Again, we recall that in this work, for the sake of clarity, we use the word ``existential" to refer to doctrines with \emph{all} left adjoints satisfying (BCC) and whose base category has finite limits. Elementary and existential doctrines are more general doctrines, whose base category has finite products, and whose left adjoints satisfy (BCC) just along product projections and diagonals arrows. 

\begin{theorem}[Exact completion]\thlabel{theorem maietti rosolini pasquali exact comp}
	The assignment $P\mapsto \mathsf{T}_{P}$ extends to a 2-functor 
	\[\begin{tikzcd}
		\ED && \Excat
		\arrow[""{name=0, anchor=center, inner sep=0}, "\mathsf{T}_{(-)}",curve={height=-14pt}, from=1-1, to=1-3]
		\arrow[""{name=1, anchor=center, inner sep=0}, "\Sub_{(-)}",curve={height=-14pt}, hook',from=1-3, to=1-1]
		\arrow["\dashv"{anchor=center, rotate=-90}, draw=none, from=1, to=0]
	\end{tikzcd}\]
	from the 2-category $\ED$ of existential doctrines to the 2-category $\Excat$ of exact categories, and it is left adjoint to the functor sending an exact category $\mC$ to the doctrine $\Sub_{\mC}$ of its subobjects.
	\end{theorem}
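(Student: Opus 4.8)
The plan is to follow the strategy of Maietti and Rosolini for the elementary quotient completion, specialised to the present setting in which all left adjoints (subject to (BCC) and (FR)) are available. The argument splits into three parts: verifying that $\mathsf{T}_P$ is exact, promoting the assignment to a $2$-functor, and establishing the universal property of the adjunction.

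First I would check that $\mathsf{T}_P$ is an exact category for every existential doctrine $P$. Finite limits are built from the fibrewise conjunctions together with the left adjoints along diagonals, which furnish the equality predicates $\delta_A := \exists_{\Delta_A}(\top)$; the terminal object is $(1,\top)$ and products and equalizers are assembled fibrewise. Regularity is the heart of this step: every functional relation $\phi\colon (A,\rho)\to (B,\sigma)$ factors as a regular epi followed by a mono, the image being obtained by applying $\exists_{\pr_2}$ to $\phi$, and one checks stability of this factorization under pullback using (BCC) and (FR). Finally, every equivalence relation in $\mathsf{T}_P$ is, up to isomorphism, of the form $(A,\rho')$ for a predicate $\rho'$ refining $\rho$, hence is the kernel pair of its own coequalizer, so equivalence relations are effective. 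In the sub-case of a tripos this exactness is already guaranteed by the fact, recalled above, that $\mathsf{T}_P$ is a topos.

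Next I would define the action on $1$- and $2$-cells. An existential morphism $(F,\mathfrak{b})\colon P\to R$ induces a functor $\mathsf{T}_{(F,\mathfrak{b})}\colon \mathsf{T}_P\to \mathsf{T}_R$ sending $(A,\rho)$ to $(FA,\mathfrak{b}_{A\times A}(\rho))$ and acting by $\mathfrak{b}$ on functional relations. Preservation of finite limits follows from $F$ preserving finite limits and $\mathfrak{b}$ preserving $\wedge$ and the equality predicates, while the existentiality hypothesis (that $\mathfrak{b}$ commutes with the left adjoints) is exactly what guarantees that $\mathsf{T}_{(F,\mathfrak{b})}$ preserves images, hence is exact. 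A doctrine transformation $\theta$ yields a natural transformation $\mathsf{T}_\theta$ componentwise from the graphs of $\theta_A$, and $2$-functoriality of $\mathsf{T}_{(-)}\colon \ED\to \Excat$ is then a routine verification.

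Finally, for the adjunction I would exhibit the unit $\eta_P\colon P\to \Sub_{\mathsf{T}_P}$, sending $\alpha\in P(A)$ to the subobject of $(A,\delta_A)$ cut out by $\alpha$, and prove the universal property as a natural equivalence $\Excat(\mathsf{T}_P,\mE)\simeq \ED(P,\Sub_{\mE})$ for every exact category $\mE$. Precomposition with $\eta_P$ gives the forward map; its pseudo-inverse sends an existential morphism $P\to \Sub_{\mE}$ to the functor taking $(A,\rho)$ to the quotient in $\mE$ of the equivalence relation determined by $\rho$, using exactness of $\mE$ precisely to form these quotients. The hard part is this inverse construction: one must check that it is well defined on functional relations, that it is genuinely exact (sending effective quotients to effective quotients), and that the two passages are mutually pseudo-inverse naturally in both arguments. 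The crux of the latter is the idempotency statement $\mathsf{T}_{\Sub_{\mE}}\simeq \mE$ for exact $\mE$, which exhibits $\Excat$ as a reflective sub-$2$-category of $\ED$ and yields the asserted $2$-adjunction.
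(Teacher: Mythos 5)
The paper does not prove this theorem itself --- it is recalled verbatim from Maietti--Rosolini (and Maietti--Pasquali--Rosolini), and your outline is a faithful reconstruction of the argument given there: exactness of $\mathsf{T}_P$ via the $\exists$-image factorization and effectiveness of equivalence relations $(A,\rho')$ with $\rho\leq\rho'$, functoriality on existential $1$- and $2$-cells, the unit $\alpha\mapsto$ the subobject of $(A,\delta_A)$ it cuts out, and the counit equivalence $\mathsf{T}_{\Sub_{\mE}}\simeq\mE$. The only point worth making explicit, since the paper's $\ED$ is stricter than the elementary existential doctrines of the cited source, is that $\Sub_{\mE}$ indeed has left adjoints along \emph{all} morphisms satisfying (BCC) for arbitrary pullback squares (by stability of images in a regular category), so the biadjunction genuinely restricts to $\ED$; with that observation your proof is correct and matches the intended one.
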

The tripos-to-topos can be seen, in particular, as a generalization of the exact completion~\cite{CARBONI199879} of a category with finite limits. We recall the following example from \cite{maiettirosolini13b,maiettirosolinipasquali2019}:
    \begin{example}\thlabel{ex: exact comp of weak sub 1}  
	The exact completion  $\exlex{\mC}$ of a category $\mC$  with finite limits happens to be equivalent to the exact completion (tripos-to-topos) $\mathsf{T}_{\Psi_{\mC}}$ of the doctrine $\doctrine{\mC}{\Psi_{\mC}}$ of weak subobjects of $\mC$.
	\end{example}

\begin{remark}\thlabel{rem_real_topos_as_ex_lex}
We have seen in \thref{ex_RT_as_tripos_to_topos} that, given a PCA $\pca{A}$, the realizability topos $\mathsf{RT}[\pca{A}]$ is exactly the tripos-to-topos of the realizability tripos. And this works also for relative realizability toposes $\mathsf{RT}[\pca{A},\subpca{A}]$.

But notice that there is a second tripos that we can use to define realizability toposes as tripos-to-topos. Indeed, Robinson and Rosolini proved in~\cite{robinsonrosolini90} that realizability toposes are equivalent to the exact completion of the category of partitioned assemblies. Therefore, combining this result with  \thref{ex: exact comp of weak sub 1} we have that

\[
\mathsf{RT}[\pca{A},\pca{A}'] \equiv \mathsf{T}_{\Psi_{\partAsm (\pca{A},\subpca{A})}}
    \]
\end{remark}
\subsection{The topos of extended Weihrauch degrees}
Now we have introduced all the tools we need to define a topos for extended  Weihrauch degrees:

\begin{definition}\thlabel{def_wei_topos}
    We define the \textbf{topos of extended Weihrauch degrees} as the topos $\mathsf{EW}[\pca{A},\pca{A}']:=\mathsf{T}_{\EWei}$ obtained by applying the tripos-to-topos to the tripos $\doctrine{\partAsm(\pca{A},\subpca{A})}{\EWei}$.
\end{definition}

\textbf{Notation}: for $\rho:X\times \mathbb{A}\rightarrow \powerset\powerset(\pca{A}) $ with $(X,\varphi)$ a partitioned assembly we introduce the following restriction function notation:
$$\rho_{|(X,\varphi)}(x,a):=\begin{cases} \rho(x,\sprPCA a) &\text{ if } \fprPCA a =\varphi (x) \\
\emptyset &\text{ otherwise} \end{cases}$$
and we simply write $\rho_{|}$ where the partitioned assembly is clear from the context.\\

The topos $\mathsf{EW}[\pca{A},\pca{A}']$ can hence be more explicitly presented as follows.\\

\textbf{objects:} pairs $((X,\varphi),\rho)$ such that $\rho\function{(X\times X) \times \pca{A}}{\powerset\powerset (\pca{A})}$  satisfies

\begin{itemize}

\item \emph{symmetry:} $\rho((x_1,x_2),a)\EWeiDoctrineOrder \rho_{|}((x_2,x_1),a)$ where restrictions are w.r.t.\ $(X,\varphi)\times (X,\varphi)$;
\item \emph{transitivity:} $\rho_{|}((x_1,x_2),a)\wedge \rho_{|}((x_2,x_3),a)\EWeiDoctrineOrder  \rho_{|}((x_1,x_3),a)$ where restrictions are w.r.t.\ $(X,\varphi)\times (X,\varphi)\times (X,\varphi)$.
\end{itemize} 

\textbf{arrows:} $\phi:((X,\varphi),\rho)\rightarrow ((Y,\psi),\eta)$  are 
functions $\phi:(X\times Y)\times \mathbb{A}\rightarrow \powerset\powerset(\pca{A})$ such that:

\begin{itemize}

\item  $\phi(x,y,a)\EWeiDoctrineOrder \rho_{|}((x,x),a)\wedge \eta_{|}((y,y),a)$ where restrictions are w.r.t.\ $(X,\varphi)\times  (Y,\psi)$;
\item  $\rho_{|}((x_1,x_2),a)\wedge \phi_{|}((x_1,y),a)\EWeiDoctrineOrder  \phi_{|}((x_2,y),a)$ where restrictions are w.r.t.\ $(X,\varphi)\times (X,\varphi)\times  (Y,\psi)$;
\item  $\eta_{|}((y_1,y_2),a)\wedge \phi_{|}((x,y_1),a)\EWeiDoctrineOrder  \phi_{|}((x,y_2),a)$ where restrictions are w.r.t.\ $(X,\varphi)\times (Y,\psi)\times  (Y,\psi)$;
\item $\phi_{|}((x,y_1),a)\wedge \phi_{|}((x,y_2),a)\EWeiDoctrineOrder  \eta_{|}((y_1,y_2),a)$ where restrictions are w.r.t.\ $(X,\varphi)\times (Y,\psi)\times  (Y,\psi)$;
\item $\rho_{|}((x,x),a)\EWeiDoctrineOrder  \exists_{\pi_1}\phi((x,y),a)$ where restrictions are w.r.t.\ $(X,\varphi)$.
\end{itemize} 

Notice that in the previous items, we used notations of the form $f(a,b,c)$ to indicate the function $(a,b,c)\mapsto f(a,b,c)$. Thus $\wedge$ denoted the infimum in the appropriate fibre of the tripos $\EWei$ and $\exists_{\pi_1}$ the appropriate left adjoint there.

\begin{theorem}\thlabel{thm_main}
    Let $\pca{A}$ be a PCA and $\pca{A}'$ be an elementary sub-PCA of $\pca{A}$. Then the (relative) realizability topos $\mathsf{RT}[\pca{A},\pca{A}']$ can be embedded into the topos of extended Weihrauch degrees $\mathsf{EW}[\pca{A},\pca{A}']$  and this embedding has a left adjoint

\[\begin{tikzcd}
	\mathsf{RT}[\pca{A},\pca{A}'] && \mathsf{EW}[\pca{A},\pca{A}']
	\arrow[""{name=0, anchor=center, inner sep=0}, curve={height=12pt}, hook, "R"',from=1-1, to=1-3]
	\arrow[""{name=1, anchor=center, inner sep=0}, curve={height=12pt}, "L"',from=1-3, to=1-1]
	\arrow["\dashv"{anchor=center, rotate=-90}, draw=none, from=1, to=0]
\end{tikzcd}\]
such that $L\circ R$ is the identity. Moreover, both the functors $R$ and $L$ are morphisms of exact categories.
\end{theorem}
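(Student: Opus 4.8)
The plan is to realise both toposes as exact completions (tripos-to-topos constructions) of full existential completions over the \emph{same} base category, and then to obtain the adjunction $L \dashv R$ purely formally by transporting an adjunction that is already visible at the level of the underlying primary doctrines. Concretely, combining \thref{rem_real_topos_as_ex_lex} with \thref{ex_weaksub_is_full_ex_comp} gives $\mathsf{RT}[\pca{A},\pca{A}'] \equiv \mathsf{T}_{\Psi} = \mathsf{T}_{\mathsf{t}^{\exists}}$, where $\doctrine{\partAsm(\pca{A},\subpca{A})}{\mathsf{t}}$ is the trivial primary doctrine; and combining \thref{def_wei_topos} with \thref{thm_ex_descr_wei_doc} gives $\mathsf{EW}[\pca{A},\pca{A}'] = \mathsf{T}_{\EWei} \cong \mathsf{T}_{\InstRedDoc} = \mathsf{T}_{\EWeiElementary^{\exists}}$. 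Hence both toposes are values of the composite $2$-functor $\mathsf{T}_{(-)}\circ(-)^{\exists}$ (see \thref{thm:full existential comp} and \thref{theorem maietti rosolini pasquali exact comp}) on the two primary doctrines $\mathsf{t}$ and $\EWeiElementary$ living over the common base $\partAsm(\pca{A},\subpca{A})$.

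The next step is to build an adjunction between $\mathsf{t}$ and $\EWeiElementary$ in $\PD$. Since $\mathsf{t}$ is terminal among doctrines with identity base functor, there is a unique primary morphism $\freccia{\EWeiElementary}{\mathfrak{a}}{\mathsf{t}}$; in the opposite direction I define $\freccia{\mathsf{t}}{\mathfrak{b}}{\EWeiElementary}$ by sending the unique element of each singleton fibre to the top element $\top$ of $\EWeiElementary(X)$, namely the function constantly equal to $\emptyset$. This $\mathfrak{b}$ is a genuine morphism of primary doctrines because every reindexing of $\EWeiElementary$ preserves $\top$, which also makes $\mathfrak{b}$ natural. I then claim $\mathfrak{a}\dashv\mathfrak{b}$. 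The composite $\mathfrak{a}\circ\mathfrak{b}$ equals $\id_{\mathsf{t}}$, so the counit is the identity; and the unit $\id_{\EWeiElementary}\Rightarrow\mathfrak{b}\circ\mathfrak{a}$ exists as a doctrine transformation exactly because $\alpha\EWeiOrderElementary\top$ holds for every $\alpha$. Restricting attention to $2$-cells whose base component is the identity, the relevant hom-orders become posets, so the triangle identities are automatic and this data is an honest adjunction in $\PD$.

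It then remains to push this adjunction through the two completion $2$-functors. Applying $(-)^{\exists}$ produces $\mathfrak{a}^{\exists}\dashv\mathfrak{b}^{\exists}$ in $\ED$, an adjoint pair of existential morphisms between $\InstRedDoc=\EWeiElementary^{\exists}$ and $\Psi=\mathsf{t}^{\exists}$; applying the exact-completion $2$-functor $\mathsf{T}_{(-)}$ then yields $L:=\mathsf{T}_{\mathfrak{a}^{\exists}}\dashv\mathsf{T}_{\mathfrak{b}^{\exists}}=:R$, with $\freccia{\mathsf{EW}[\pca{A},\pca{A}']}{L}{\mathsf{RT}[\pca{A},\pca{A}']}$ and $\freccia{\mathsf{RT}[\pca{A},\pca{A}']}{R}{\mathsf{EW}[\pca{A},\pca{A}']}$. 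Because $2$-functors send adjunctions to adjunctions and adjunctions in $\Excat$ are ordinary adjunctions of functors, $L$ and $R$ are exact functors by construction, giving the stated morphisms of exact categories. Finally, $2$-functoriality gives $L\circ R=\mathsf{T}_{(\mathfrak{a}\circ\mathfrak{b})^{\exists}}=\mathsf{T}_{\id}=\id$ on the model $\mathsf{T}_{\mathsf{t}^{\exists}}$ of $\mathsf{RT}[\pca{A},\pca{A}']$, so $L\circ R$ is the identity; and since the counit of $\mathfrak{a}\dashv\mathfrak{b}$ is the identity, so is the counit of $L\dashv R$, whence $R$ is full and faithful, i.e.\ the desired embedding.

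I expect the genuine difficulty to be bookkeeping rather than any hard computation. The points needing care are that $\mathfrak{b}$ is well defined and natural, that $\mathsf{t}$ really is terminal among doctrines over the fixed base with identity reindexing (so that $\mathfrak{a}$ and the equality $\mathfrak{a}\circ\mathfrak{b}=\id_{\mathsf{t}}$ make sense), and---most importantly---that the two completion $2$-functors carry this identity-base-component adjunction to an honest adjunction of categories whose counit is again an identity. The computational content is essentially nil, reducing to the inequality $\alpha\EWeiOrderElementary\top$ and the identity $\mathfrak{a}\circ\mathfrak{b}=\id_{\mathsf{t}}$; all the substantive work has been absorbed into the earlier structural results, above all the fibrewise isomorphism of \thref{thm_ex_descr_wei_doc} and the $2$-functoriality statements \thref{thm:full existential comp} and \thref{theorem maietti rosolini pasquali exact comp}.
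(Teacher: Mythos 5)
Your proposal is correct and follows essentially the same route as the paper: the paper's proof likewise builds the adjunction $l\dashv r$ between the trivial doctrine $\mathsf{t}$ and $\EWeiElementary$ over the common base $\partAsm(\pca{A},\subpca{A})$ (with $r$ picking out $\top$ and $l\circ r=\id$), pushes it through the full existential completion and the tripos-to-topos $2$-functors, and identifies the resulting categories via \thref{ex_weaksub_is_full_ex_comp}, \thref{thm_ex_descr_wei_doc} and \thref{rem_real_topos_as_ex_lex}. Your write-up is if anything slightly more explicit than the paper's about why the doctrine-level data is an honest adjunction in $\PD$ and why the identity counit yields full faithfulness of $R$.
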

\begin{proof}
    Observe that we have an action of doctrines 
\[\begin{tikzcd}
	\partAsm (\pca{A},\pca{A}')^{\op} \\
	&& \mathsf{InfSl} \\
	\partAsm (\pca{A},\pca{A}')^{\op}
	\arrow[""{name=0, anchor=center, inner sep=0}, "\mathsf{t}", from=1-1, to=2-3]
	\arrow["\id"',from=1-1, to=3-1]
	\arrow[""{name=1, anchor=center, inner sep=0}, "\EWeiElementary"',from=3-1, to=2-3]
	\arrow[""{name=2, anchor=center, inner sep=0}, curve={height=-12pt}, shorten <=6pt, shorten >=6pt, hook', "r",from=0, to=1]
	\arrow[""{name=3, anchor=center, inner sep=0}, curve={height=-12pt}, shorten <=6pt, shorten >=6pt, "l",from=1, to=0]
	\arrow["\dashv"{anchor=center}, draw=none, from=3, to=2]
\end{tikzcd}\]
where $\doctrine{\partAsm [\pca{A},\pca{A}']}{\mathsf{t}}$ is the doctrine whose fibres have just one element, as defined in \thref{ex_weaksub_is_full_ex_comp}. The natural transformation $r$ sends the unique element into the top element, while $l$ sends every element of a fibre into the unique element of the fibre of $\mathsf{t}$. In particular, $l\circ r$ is the identity.

Then we can apply to this adjunction the full existential completion, and we obtain an adjunction of existential doctrines
\[\begin{tikzcd}
	\partAsm (\pca{A},\pca{A}')^{\op} \\
	&& \mathsf{InfSl} \\
	\partAsm (\pca{A},\pca{A}')^{\op}
	\arrow[""{name=0, anchor=center, inner sep=0}, "\Psi_{\partAsm (\pca{A},\pca{A}')}", from=1-1, to=2-3]
	\arrow["\id"',from=1-1, to=3-1]
	\arrow[""{name=1, anchor=center, inner sep=0}, "\EWei "',from=3-1, to=2-3]
	\arrow[""{name=2, anchor=center, inner sep=0}, curve={height=-12pt}, shorten <=6pt, shorten >=6pt, hook', "r",from=0, to=1]
	\arrow[""{name=3, anchor=center, inner sep=0}, curve={height=-12pt}, shorten <=6pt, shorten >=6pt, "l",from=1, to=0]
	\arrow["\dashv"{anchor=center}, draw=none, from=3, to=2]
\end{tikzcd}\]
because by \thref{ex_weaksub_is_full_ex_comp} we have that $\mathsf{t}^{\exists}\cong \Psi_{\partAsm (\pca{A},\pca{A}')}$ and by \thref{thm_ex_descr_wei_doc} we have that $\EWeiElementary^{\exists}= \InstRedDoc \cong \EWei$. Therefore, we are in the right setting for applying the tripos-to-topos construction, obtaining, by \thref{theorem maietti rosolini pasquali exact comp} an adjunction of exact categories
\[\begin{tikzcd}
	\mathsf{T}_{\Psi_{\partAsm (\pca{A},\pca{A}')}} && \mathsf{T}_{\EWei}
	\arrow[""{name=0, anchor=center, inner sep=0}, curve={height=12pt}, hook, "R"',from=1-1, to=1-3]
	\arrow[""{name=1, anchor=center, inner sep=0}, curve={height=12pt}, "L"',from=1-3, to=1-1]
	\arrow["\dashv"{anchor=center, rotate=-90}, draw=none, from=1, to=0]
\end{tikzcd}\]
Now, by \thref{rem_real_topos_as_ex_lex}, we have that $\mathsf{RT}[\pca{A},\pca{A}'] \equiv \mathsf{T}_{\Psi_{\partAsm (\pca{A},\pca{A})'}}$, and by \thref{def_wei_topos}, we have that $\mathsf{EW}[\pca{A},\pca{A}']=\mathsf{T}_{\EWei}$. Hence we obtain the adjunction 
\[\begin{tikzcd}
	\mathsf{RT}[\pca{A},\pca{A}'] && \mathsf{EW}[\pca{A},\pca{A}']
	\arrow[""{name=0, anchor=center, inner sep=0}, curve={height=12pt}, hook, "R"',from=1-1, to=1-3]
	\arrow[""{name=1, anchor=center, inner sep=0}, curve={height=12pt}, "L"',from=1-3, to=1-1]
	\arrow["\dashv"{anchor=center, rotate=-90}, draw=none, from=1, to=0]
\end{tikzcd}\]
\end{proof}
Notice that the adjunction presented in \thref{thm_main} is in particular a geometric embedding of toposes. Hence, given the well-known correspondence between geometric embeddings (i.e.\ geometric morphisms whose counit is an iso) and Lawvere-Tierney topologies (e.g. see \cite[Cor.~7, Sec. VII]{SGL}) we obtain the following corollary:
\begin{corollary}\thlabel{cor_sheaves_real_top}
    The relative realizability topos $\mathsf{RT}[\pca{A},\pca{A}'] $ is equivalent to a topos of $j$-sheaves $\mathsf{sh}_j(\mathsf{EW}[\pca{A},\pca{A}'])$ for a certain Lawvere-Tierney topology $j$ over $ \mathsf{EW}[\pca{A},\pca{A}']$.
\end{corollary}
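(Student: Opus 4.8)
The plan is to read the adjunction produced in \thref{thm_main} as a geometric embedding of toposes, and then to invoke the standard dictionary between geometric embeddings (subtoposes) and Lawvere--Tierney topologies. Concretely, \thref{thm_main} supplies functors $L\colon \mathsf{EW}[\pca{A},\pca{A}']\to \mathsf{RT}[\pca{A},\pca{A}']$ and $R\colon \mathsf{RT}[\pca{A},\pca{A}']\to \mathsf{EW}[\pca{A},\pca{A}']$ with $L\dashv R$, together with the extra information that both $L$ and $R$ are exact functors and that $L\circ R\cong \id$. I would package this as the data of a geometric morphism
\[ i\colon \mathsf{RT}[\pca{A},\pca{A}']\longrightarrow \mathsf{EW}[\pca{A},\pca{A}'], \qquad i^{\ast}:=L,\quad i_{\ast}:=R. \]

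First I would check that $i$ is a genuine geometric morphism: its inverse-image part $i^{\ast}=L$ is a left adjoint, and since it is a morphism of exact categories it preserves finite limits, hence is left exact, exactly as a geometric inverse image must be. Next I would observe that $i$ is in fact a geometric \emph{embedding} (an inclusion). This is precisely the requirement that the counit $i^{\ast}i_{\ast}=L\circ R\to \id$ be an isomorphism, which is exactly the statement $L\circ R\cong \id$ recorded in \thref{thm_main}. Equivalently, by the standard fact that for an adjunction $L\dashv R$ the right adjoint $R$ is fully faithful iff this counit is invertible, the isomorphism $L\circ R\cong\id$ identifies $R=i_{\ast}$ as a full embedding, so that $\mathsf{RT}[\pca{A},\pca{A}']$ sits inside $\mathsf{EW}[\pca{A},\pca{A}']$ as a subtopos.

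Finally I would apply the classification of subtoposes: every geometric embedding into a topos $\mathcal{E}$ is, up to equivalence over $\mathcal{E}$, the canonical inclusion $\mathsf{sh}_j(\mathcal{E})\hookrightarrow \mathcal{E}$ of the $j$-sheaves for a uniquely determined Lawvere--Tierney topology $j$ on $\mathcal{E}$ (see \cite[Cor.~7, Sec.~VII]{SGL}). Applying this with $\mathcal{E}=\mathsf{EW}[\pca{A},\pca{A}']$ yields a topology $j$ and an equivalence $\mathsf{RT}[\pca{A},\pca{A}']\simeq \mathsf{sh}_j(\mathsf{EW}[\pca{A},\pca{A}'])$, which is the assertion.

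I expect the only real point requiring care is the translation of the formal output of \thref{thm_main} into the hypotheses of a geometric embedding in the correct variance, namely matching ``left adjoint'' with ``inverse image'', ``exactness'' with ``left exactness'', and ``$L\circ R\cong\id$'' with ``invertible counit''. Once that identification is in place, the statement is an immediate instance of the cited correspondence, and no substantial further obstacle remains.
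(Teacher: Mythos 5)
Your argument is exactly the paper's: the authors likewise read the adjunction of \thref{thm_main} as a geometric embedding (inverse image $L$, direct image $R$, counit invertible because $L\circ R$ is the identity) and then invoke the same correspondence between geometric embeddings and Lawvere--Tierney topologies from \cite[Cor.~7, Sec.~VII]{SGL}. Your version merely spells out the variance-matching that the paper leaves implicit, so there is nothing to correct.
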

An interesting application of \thref{thm_main} regards the posets of subobjects over the terminals of the two toposes. Indeed, we have an adjunction
\[\begin{tikzcd}
	\Sub_{\mathsf{RT}[\pca{A},\pca{A}']}(1) && \Sub_{\mathsf{EW}[\pca{A},\pca{A}']}(1)
	\arrow[""{name=0, anchor=center, inner sep=0}, curve={height=12pt}, hook, "R"',from=1-1, to=1-3]
	\arrow[""{name=1, anchor=center, inner sep=0}, curve={height=12pt}, "L"',from=1-3, to=1-1]
	\arrow["\dashv"{anchor=center, rotate=-90}, draw=none, from=1, to=0]
\end{tikzcd}\]
between the subobjects of $1$ of the Kleene-Vesley topos and the subobjects of $1$ of the extended Weihrauch degrees topos.

Now, it is well-known that the subobjects over the terminal object of a tripos-to-topos correspond to the elements of the fibre over the terminal object of the generating tripos. Hence, combining this result with \thref{thm_EWei(1) is ext weih} and \thref{def:full medvedev doctrine} we have the following corollary:

\begin{corollary}\thlabel{last cor}
Let $\pca{A}$ be a PCA, and $\pca{A}'$ be an elementary sub-PCA. Then we have adjunction:
 \[\begin{tikzcd}
	\MedvedevDoctrine(1)^{\op} && \EWei (1)
	\arrow[""{name=0, anchor=center, inner sep=0}, curve={height=12pt}, hook, "R"',from=1-1, to=1-3]
	\arrow[""{name=1, anchor=center, inner sep=0}, curve={height=12pt}, "L"',from=1-3, to=1-1]
	\arrow["\dashv"{anchor=center, rotate=-90}, draw=none, from=1, to=0]
\end{tikzcd}\]
 In particular, the dual of the poset of Medvedev degrees embeds in the poset of extended Weihrauch degrees, and this embedding has a left adjoint.
\end{corollary}
We conclude by providing an explicit description  of such an instance of the geometric embedding $L\dashv R$ and the composition $RL$, which also helps in having an intuition of the features of the topology $j$ presented in \thref{cor_sheaves_real_top}. 

Let us consider an extended Weihrauch predicate $f\function{\pca{A}}{\powerset\powerset(\pca{A})}$. Then we have that
\[L(f):=\{a\in \pca{A}| f(a)\neq \emptyset\}\]
Now, let us consider a subset $\Lambda\subseteq \pca{A}$. Then we define the function $R(\Lambda)\function{\pca{A}}{\powerset\powerset(\pca{A})}$
\[R(\Lambda)(a):=\begin{cases}
    \{\emptyset\} \text{ if } a\in \Lambda\\
    \emptyset \text{ otherwise}
\end{cases}\]
Then, given an  extended Weihrauch predicate $f\function{\pca{A}}{\powerset\powerset(\pca{A})}$, $RL$ associates a predicate $RL(f)\function{\pca{A}}{\powerset\powerset(\pca{A})}$ with the same inputs as $f$, but with no possible outputs:
\[
RL(f)(a) = \begin{cases}
    \{\emptyset\} \text{ if } f(a)\neq \emptyset\\
    \emptyset \text{ otherwise}
\end{cases}
\]
In particular, if $f \neq 0$, then $RL(f)$ is not a $\neg\neg$-dense extended Weihrauch predicate in the sense of Bauer \cite{Bauer2021}.

This explicit description allows us to conclude the embedding presented in \thref{last cor} coincides with the embedding presented in \cite[Lem.~5.6]{HiguchiPauly} (when we consider $\pca{A}:=\mathbb{N}^{\mathbb{N}}$).

\section*{Conclusions and future work}
We have presented here a new tripos abstracting the notion of extended Weihrauch reducibility and the corresponding topos obtained via the tripos-to-topos construction. Then, we have employed the existential completion to establish the formal connection between toposes of extended Weihrauch degrees and (relative) realizability toposes. These happen to be toposes of $j$-sheaves for certain Lawevere-Tierney topologies over the toposes of extended Weihrauch degrees. 

In future work, we aim to start studying in detail the logic inside this new topos we introduced and its categorical properties. We expect it should enjoy some of the main categorical properties of (relative) realizability toposes, such as being an exact completion of a lex category. 

Finally, we intend to explore connections with other categorical notions related to extended Weihrauch reducibility, such as \emph{containers}, see \cite{pradic2025s}, and to study potential connections with linear logic. Indeed, two tensor products are widely studied  in the context of Weihrauch reducibility, see e.g. \cite{KIHARA_MARCONE_PAULY_2020,HiguchiPauly,pradic2025s}. Hence, it would be interesting to study how these notions enrich the internal language of the topos generated by the tripos of extended Weihrauch degrees.

\subsection*{Acknowledgements}
We would like to thank Maria Emilia Maietti and Francesco Ciraulo for ideas and suggestions on ongoing joint work, and Manlio Valenti for helpful discussions and comments concerning Weihrauch degrees and their variants. We thank Cécilia Pradic for observing and suggesting that the elementary instance reducibility doctrine is a universal completion, as stated in \thref{prop:eiR is universal com}. 
Finally, we also thank the anonymous referees for their careful reading of the paper and many valuable suggestions.
\bibliographystyle{plain}
\bibliography{references}

\end{document}